\DeclareMathOperator{\Gal}{Gal}
\DeclareMathOperator{\Frob}{Frob}
\DeclareMathOperator{\Aut}{Aut}
\DeclareMathOperator{\Jac}{Jac}
\DeclareMathOperator{\Sp}{Sp}
\DeclareMathOperator{\Conf}{Conf}
\newcommand{\field}[1]{\mathbb{#1}}
\newcommand{\Q}{\field{Q}}
\newcommand{\Z}{\field{Z}}
\newcommand{\F}{\field{F}}
\newcommand{\R}{\field{R}}
\newcommand{\A}{\field{A}}
\renewcommand{\P}{\field{P}}
\newcommand{\ord}{\mbox{ord}}
\newcommand{\ra}{\rightarrow}
\newcommand{\Tr}{\mbox{\textnormal Tr}}
\renewcommand{\a}{\langle a \rangle}
\newcommand{\beq}{\begin{displaymath}}
\newcommand{\eeq}{\end{displaymath}}
\newcommand{\beqn}{\begin{equation}}
\newcommand{\eeqn}{\end{equation}}
\newcommand{\Fqbar}{\overline{\field{F}}_q}
\newcommand{\Hn}{\mathsf{Hn}}
\DeclareMathOperator{\rank}{rank}
\numberwithin{equation}{section}
\newtheorem{theorem}[equation]{Theorem}
\newtheorem{remark}[equation]{Remark}
\newtheorem{lemma}[equation]{Lemma}
\newtheorem{corollary}[equation]{Corollary}
\newtheorem{proposition}[equation]{Proposition}
\title{Nonvanishing of hyperelliptic zeta functions over finite fields}
\author{Jordan S. Ellenberg, Wanlin Li, Mark Shusterman}
\date{}
\begin{document}
	
\maketitle

\begin{abstract}
	Fixing $t \in \R$ and a finite field $\F_q$ of odd characteristic,
	we give an explicit upper bound on the proportion of genus $g$ hyperelliptic curves over $\F_q$ whose zeta function vanishes at $\frac{1}{2} + it$. 
	Our upper bound is independent of $g$ and tends to $0$ as $q$ grows.
	
\end{abstract}

\section{Introduction}
Let $p$ be an odd prime, set $q = p^k$ for some positive integer $k$, and denote by $\F_q$ the finite field with $q$ elements.
To (the smooth completion of) any hyperelliptic curve $C$ over $\F_q$ one associates a zeta function $Z_C(s)$.
Weil has shown that $Z_C(s) = 0$ implies that $s = \frac{1}{2} + it$ for some $t \in \mathbb{R}$.

It is widely believed that for any fixed $s = \frac{1}{2} + it$, the `vast majority' of (hyperelliptic) curves do not have $s$ as a zero of their zeta function.
For example, it follows from the work \cite{Chav97} of Chavdarov (and its improvement by Kowalski \cite{Kow06}) that for any fixed (large enough) $g$, the proportion of genus $g$ hyperelliptic zeta functions vanishing at $s$ tends to $0$ as $q \to \infty$.

Here we are concerned with the growing $g$ regime. Namely, for fixed $q$ (and $s$), we give an upper bound on
\begin{equation}
h_{q,s} := \sup_{g} \frac{\big|\{C \in \mathcal{H}_g(\F_q) : Z_C(s) = 0\}\big|}{\big|\mathcal{H}_g(\F_q)\big|}    
\end{equation}

where $\mathcal{H}_g(\F_q)$ is the family of genus $g$ hyperelliptic curves over $\F_q$. 
Our bound is better once $q$ is large, as given by our main result.

\begin{theorem}[Theorem \ref{ThmNonvanishing}] \label{FirstRes}
	
	Fix a prime $p$, a real number $t$, and set $s = \frac{1}{2} + it$. 
	Then as $k \to \infty$ we have
	\begin{equation}
	h_{p^k, s} \ll p^{-k/276}.
	\end{equation}
	In particular, $h_{p^k,s}$ tends to $0$ as $k$ tends to $\infty$.
\end{theorem}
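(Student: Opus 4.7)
The plan is to treat the question as a nonvanishing problem for the family of quadratic Dirichlet $L$-functions over $\F_q[x]$, attacked by the classical mollifier method combined with explicit computations of the first and second moments. The starting point is the identification $L_C(T) = L(T, \chi_f)$, where $C : y^2 = f(x)$ and $\chi_f$ is the quadratic character associated to the squarefree polynomial $f \in \F_q[x]$ of degree $d \in \{2g+1, 2g+2\}$. Under this identification, $Z_C(\tfrac{1}{2}+it) = 0$ is equivalent to $L(q^{-s}, \chi_f) = 0$, so the task reduces to upper-bounding, uniformly in $d$, the proportion of squarefree $f$ of degree $d$ at which this vanishing occurs. (If $q^{it}$ is transcendental then no such $f$ exists and the bound is trivial, so we may assume throughout that $\alpha = q^{1/2}e^{it\log q}$ is an algebraic integer of absolute value $\sqrt{q}$.)

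The main technical device is a Dirichlet-polynomial mollifier
\beq
M(\chi_f) = \sum_{\substack{h \in \F_q[x]\text{ monic}\\ \deg h \leq Y}} \mu(h)\,\chi_f(h)\,q^{-s\deg h}
\eeq
of length $Y$ to be optimized, together with the Cauchy--Schwarz inequality
\beq
\Bigl|\sum_f M(\chi_f)\,L(q^{-s},\chi_f)\Bigr|^2 \;\leq\; \#\{f : L(q^{-s},\chi_f) \neq 0\} \;\cdot\; \sum_f \bigl|M(\chi_f)\,L(q^{-s},\chi_f)\bigr|^2.
\eeq
An upper bound on the proportion of vanishing $f$ then follows from pairing a lower bound on the mollified first moment $\sum_f ML$ (which, with $M$ chosen so that $ML\approx 1$ on average, has main term of size $\asymp\#\mathcal{H}_g(\F_q)$) with an upper bound on the mollified second moment $\sum_f |ML|^2$.

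These two moments are to be computed by the function-field analogs of the quadratic-character mean-value machinery developed in the number-field setting by Jutila and Heath-Brown and in the function-field setting by Florea, Bui--Florea--Keating, and others. The essential analytic tools are Poisson summation for $\F_q[x]$ and sharp estimates for sums of the quadratic symbol over squarefree $f$ of a fixed degree; the main terms of the two moments are made to cancel after mollification, and the residual bound on the vanishing proportion is governed by the size of the off-diagonal error in the mollified second moment. Choosing $Y$ optimally as a function of $q$ and $d$ and bookkeeping constants carefully yields the stated bound $p^{-k/276}$, the exponent $1/276$ emerging from balancing the size of the diagonal against the quality of the off-diagonal estimate.

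The main obstacle is uniformity in $d$. Classical mean-value theorems for quadratic $L$-functions are typically established in the regime ``$q \to \infty$ with conductor degree $d$ fixed,'' whereas the present theorem demands a second-moment estimate valid for \emph{all} $d$ simultaneously, with a quantitative power saving in $q$. Achieving this will require a careful analysis of the off-diagonal contribution when the mollifier length $Y$ is comparable to the analytic conductor of $L(\cdot,\chi_f)$, together with a uniform treatment of the contour-integral representations of the mollified moments so that the error terms do not grow with $d$. Once this uniform-in-$d$ second-moment estimate is in place, the theorem follows by the Cauchy--Schwarz step above and the final optimization over $Y$.
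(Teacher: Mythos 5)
Your plan is a genuinely different route from the paper's, and as it stands it has a gap at its central analytic step --- and that step is exactly where the difficulty of the theorem lives. The paper does not use mollifiers at all: it chooses (by Chebotarev) a prime $\ell \asymp q^{1/276}$ modulo which the minimal polynomial of $q^{s}$ has a root $a$, observes that $Z_C(s)=0$ forces the characteristic polynomial $P_f$ to vanish mod $\ell$ at $a^{-1}$ and hence forces a nonzero vector in $J_f[\ell](\Fqbar)$ on which $\Frob_q$ acts by the scalar $a$, and then bounds the average number of such vectors over squarefree $f$ of degree $n$ by counting $\F_q$-points on a twisted Hurwitz space of dihedral covers. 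The uniformity in $g$ comes from homological stability for these Hurwitz spaces (Ellenberg--Venkatesh--Westerland), and the exponent $1/276$ is dictated by the explicit stability constants ($C'_\ell \asymp \ell^{137}$, $Q_\ell \asymp \ell^{156}$) balanced against the main term $1/(\ell-1)$.

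The gap in your proposal is that you assert, but give no mechanism for, a mollified second moment that is uniform in the conductor degree $d$ and carries a power saving in $q$. First, the Cauchy--Schwarz step bounds the vanishing proportion by $1 - \bigl(\sum_f ML\bigr)^2\big/\bigl(|Q_{n,q}|\sum_f|ML|^2\bigr)$, and with a mollifier whose length is a fixed proportion $\theta<1$ of the conductor, the ratio of the two main terms is a constant depending on $\theta$ but not on $q$; this is precisely why Bui--Florea obtain $94.27\%$ nonvanishing, a bound that does not improve as $q\to\infty$. To force the vanishing proportion to $0$ you would need $\theta\to 1$, i.e.\ mollifiers of length approaching the full conductor, which is not known how to do in any regime, let alone uniformly in $d$. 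Second, the exponent $1/276$ has no natural origin in the diagonal/off-diagonal balance you describe; in the paper it is an artifact of the degrees appearing in the quantitative stability theorem for the graded ring of components of dihedral Hurwitz spaces, so ``bookkeeping constants carefully'' in the mollifier framework cannot be expected to reproduce it. The ``main obstacle'' you name at the end --- uniformity in $d$ --- is therefore not a technical loose end but the entire content of the theorem, and your plan does not contain an idea for resolving it.
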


This complements (but does not quite match) lower bounds on $h_{q,s}$ obtained by Li in \cite{Wanlin}.

Restricting $q$ to powers of a fixed prime $p$ is not always necessary. 
In case $s \neq \frac{1}{2}$, one can show (see \cite{MO}) using transcendental number theory (six exponentials theorem \cite[Chapter 2, Section 1]{Lang66}) that there are only finitely many $p$ for which $p^{-s}$ is algebraic, so $h_{q,s} = 0$ for any $q$ not divisible by these $p$ (as $Z_C(s)$ is a rational function in $q^{-s}$). Hence, it suffices to work with one characteristic at a time, as we do in the theorem above. For $s = \frac{1}{2}$, since the upper bound in Corollary \ref{MainCoro} holds for any $\ell$ when $q$ is a perfect square, we can conclude $\lim_{q \to \infty} h_{q, s} = 0$ ranging over $q$ which is an even power of a prime.

Additional motivation for Theorem \ref{FirstRes} comes from the ability to write $Z_C(s)$ as a rational function in $q^{-s}$, with the numerator being a quadratic Dirichlet $L$-function. 
Interpreted in this language of Dirichlet characters, Theorem \ref{FirstRes} improves (for all sufficiently large $q$) upon \cite[Corollary 2.1]{BF} of Bui and Florea (they give a lower bound of more than $94.27\%$ nonvanishing at $s = \frac{1}{2}$). Regarding the analogous vanishing problem for quadratic Dirichlet $L$-functions over $\mathbb{Z}$, 
we refer to the work \cite{Sound00} of Soundararajan and references therein\footnote{Results in \cite{BF} and \cite{Sound00} were stated at point $s=1/2$ but the methods can be extended to prove the statement for any point on the critical line.}.

As we explain in the last section, our theorem can be rephrased as an upper bound for the number of quadratic twists of a constant abelian variety which have positive rank.

\begin{corollary}[Corollary \ref{rank0}]\label{rank0intro}
	Let $A$ be a constant abelian variety defined over $\mathbb{F}_q(x)$. For each $f \in \mathbb{F}_{q^m}[x]$, denote by $A_f$ the quadratic twist of $A \otimes \mathbb{F}_{q^m}(x)$ by $f$.
	Let $R_{n,m}$ be the set $\{ f \in \mathbb{F}_{q^m}[x], \text{ squarefree, of } \deg n: A_f \text{ has positive rank} \}$.
	Then, $$\lim\limits_{m \to \infty} \limsup_{n \to \infty} \frac{|R_{n,m}|}{q^{m(n+1)}} =0 .$$
\end{corollary}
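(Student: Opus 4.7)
The plan is to deduce Corollary \ref{rank0intro} from Theorem \ref{FirstRes} by identifying the positive-rank condition for $A_f$ with the vanishing of $Z_{C_f}(s)$ at one of finitely many specific points on the critical line, and then union-bounding over these points.

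First, I would fix a descent $A = A_0 \otimes_{\mathbb{F}_q} \mathbb{F}_q(x)$ for the constant abelian variety, set $d = \dim A_0$, and let $\alpha_1, \ldots, \alpha_{2d}$ be the Frobenius eigenvalues of $A_0/\mathbb{F}_q$ (all of absolute value $\sqrt{q}$). For squarefree $f \in \mathbb{F}_{q^m}[x]$ of degree $n$, let $C_f$ be the hyperelliptic curve $y^2 = f(x)$ over $\mathbb{F}_{q^m}$. Since $A_f$ becomes $A$ over the quadratic cover $\mathbb{F}_{q^m}(C_f)$ and the hyperelliptic involution acts as $-1$ on $\Jac(C_f)$, one has the standard identification
\[ A_f(\mathbb{F}_{q^m}(x)) \otimes \mathbb{Q} \;\cong\; \Hom_{\mathbb{F}_{q^m}}(\Jac(C_f), A_0) \otimes \mathbb{Q}. \]
By Tate's theorem on homomorphisms of abelian varieties over finite fields, the right-hand side has dimension equal to the number of coincidences $\gamma_j = \alpha_i^m$, where $\gamma_j$ ranges over the Frobenius eigenvalues of $\Jac(C_f)$ relative to $\mathbb{F}_{q^m}$.

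Next, I would translate these coincidences into the language of zeta-function zeros. Since $Z_{C_f}(s_0) = 0$ is equivalent to $(q^m)^{s_0}$ being a Frobenius eigenvalue of $\Jac(C_f)/\mathbb{F}_{q^m}$, and $Z_{C_f}$ is periodic in $s$ with period $2\pi i/(m\log q)$, the rank of $A_f$ is positive if and only if $Z_{C_f}(s_i) = 0$ for some $i \in \{1,\ldots,2d\}$, where $s_i \in \tfrac{1}{2} + i\mathbb{R}$ is any solution of $q^{s_i} = \alpha_i$. Crucially, these $s_i$ depend only on $A_0$ and $q$, not on $m$ or $n$. Theorem \ref{FirstRes} applied over the base field $\mathbb{F}_{q^m}$ to each $s_i$ gives $h_{q^m, s_i} \ll q^{-m/276}$, with implied constant depending on $p$ and $A_0$ but uniform in $m$ and in the genus $\lfloor(n-1)/2\rfloor$. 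A union bound over the $2d$ points yields
\[ \frac{|R_{n,m}|}{q^{m(n+1)}} \;\ll\; d \cdot q^{-m/276}, \]
uniformly in $n$, from which the double limit in the corollary follows immediately.

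The main obstacle is the first reduction: one must carefully set up the Mordell--Weil rank of $A_f$ as a Hom group, invoke Tate's theorem correctly, and confirm that only finitely many critical-line points can produce positive rank (independent of $m$ and $n$). Once this rigidification is in place, the analytic decay of Theorem \ref{FirstRes} does all the work. Only minor bookkeeping remains to pass from the parametrization by genus-$g$ hyperelliptic curves in Theorem \ref{FirstRes} to squarefree $f \in \mathbb{F}_{q^m}[x]$ of degree $n$ as used in the corollary, differing only by uniform factors arising from scaling by constants.
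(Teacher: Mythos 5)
Your proposal is correct and takes essentially the same route as the paper: the paper reduces positive rank of $A_f$ to the vanishing of $L(s,\chi_f)$ at the finitely many critical-line points $s$ with $q^{s}$ a Frobenius eigenvalue of $A$ (citing \cite[Proposition 4.6]{Wanlin}, whose proof is exactly your Hom-group/Tate argument) and then applies Theorem \ref{ThmNonvanishing}. Your write-up merely makes explicit the Tate-theorem step and the union bound over the $2d$ eigenvalues that the paper leaves implicit.
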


Motivated by \cite[Corollary 2.2]{BF} and the analogous results over $\mathbb{Z}$ of Conrey, Ghosh and Gonek from \cite{CGG98}, we bound the multiplicity of the zeros of $Z_C$, and obtain further information on nonvanishing at $s = \frac{1}{2}$.  

\begin{theorem}\label{SpecialCasefromMatrix}
	
	Let $C$ be a hyperelliptic curve of genus at least $2$ over $\F_q$ and $S$ be the set of Weierstrass points of $C$. The Frobenius acts on $S$ by permuting the $2g+2$ Weierstrass points via some permutation $\pi$.  Suppose that either
	\begin{itemize}
		\item $g$ is even and $\pi$ is a $(2g+2)$-cycle; or
		\item $\pi$ is the product of two disjoint cycles of odd length.
	\end{itemize}
	
	Then:
	
	\begin{enumerate}
		\item The point $s = \frac{1}{2}$ is not a zero of $Z_C$.
		\item All zeros of $Z_C$ are of multiplicity at most $2$. Moreover, if $\pi$ is the product of two disjoint cycles of {\em coprime} lengths, all zeros of $Z_C$ are simple.
	\end{enumerate}
	
\end{theorem}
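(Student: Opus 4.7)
The plan is to pass to the action of Frobenius on the $2$-torsion $J[2]$ of the Jacobian $J = \Jac(C)$, using the classical description of $J[2]$ for a hyperelliptic curve in terms of its Weierstrass points. With $V = \F_2^S$ carrying the Frobenius action through $\pi$, $V_0 \subset V$ the sum-zero hyperplane, and $\Delta = \F_2 \cdot \sum_{w \in S} e_w \subset V_0$ (which lies in $V_0$ because $|S| = 2g+2$ is even), one has $J[2] \cong V_0/\Delta$ as Galois modules. Multiplicativity of characteristic polynomials along the filtration $0 \subset \Delta \subset V_0 \subset V$ shows that if $\pi$ has cycle structure $(d_1,\ldots,d_k)$, then the characteristic polynomial of Frobenius on $J[2]$ is
\begin{equation*}
R(T) \;:=\; \prod_{i=1}^k (T^{d_i}-1) \big/ (T-1)^2 \;\in\; \F_2[T].
\end{equation*}
Since $J[2] = T_2(J)/2T_2(J)$, this $R(T)$ is also the reduction modulo $2$ of the integer $L$-polynomial $L(T) \in \Z[T]$ of $C$, whose roots (with multiplicities) are the Frobenius eigenvalues controlling the zeros of $Z_C$.

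For Part (1), since $q$ is odd we have $T^2 - q \equiv (T-1)^2 \pmod 2$, so $\pm\sqrt q$ both reduce to $1 \in \overline{\F}_2$; hence a zero of $Z_C$ at $s = \tfrac12$ would force $R(1) = 0$. I would finish by computing $R(1)$ directly in each case. In case one, $g$ even makes $g+1$ odd, so $T^{2g+2}-1 = (T^{g+1}-1)^2$ in $\F_2[T]$, giving $R(T) = \bigl((T^{g+1}-1)/(T-1)\bigr)^2$ and $R(1) = (g+1)^2 \equiv 1 \pmod 2$. In case two, each $T^{d_i}-1$ (with $d_i$ odd) contributes $T-1$ with multiplicity exactly one in $\F_2[T]$, so $R(T) = \prod_{i=1}^2 (T^{d_i}-1)/(T-1)$ and $R(1) = d_1 d_2 \equiv 1 \pmod 2$. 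Either way $R(1) \neq 0$.

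For Part (2), if $\alpha$ is a root of $L(T)$ of multiplicity $m$, then under reduction modulo any prime of $\overline{\Z}$ above $2$ its image in $\overline{\F}_2$ is a root of $R(T)$ of multiplicity at least $m$ (factor $L$ completely over $\overline{\Z}_2$ and reduce). It therefore suffices to bound multiplicities of irreducible factors of $R$ over $\F_2$. In case one, $R = \Phi^2$ where $\Phi(T) = (T^{g+1}-1)/(T-1)$ is squarefree over $\F_2$ (since $g+1$ is odd, $T^{g+1}-1$ is squarefree), so every irreducible factor of $R$ has multiplicity exactly $2$. In case two, $R = \Phi_1 \Phi_2$ with each $\Phi_i(T) = (T^{d_i}-1)/(T-1)$ squarefree, and $\gcd(\Phi_1, \Phi_2) = (T^{\gcd(d_1,d_2)}-1)/(T-1)$; so every irreducible factor of $R$ has multiplicity at most $2$, with $R$ squarefree precisely when $\gcd(d_1,d_2) = 1$.

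The step requiring most care is the multiplicity comparison under mod-$2$ reduction: distinct roots of $L$ in $\overline{\Q}$ can collide modulo $2$, so one must argue that this can only inflate (never destroy) multiplicities, which is immediate from factoring $L$ over $\overline{\Z}_2$ before reducing. The Mumford-style identification $J[2] \cong V_0/\Delta$ is classical and should be cited rather than reproven.
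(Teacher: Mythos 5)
Your proof is correct and follows essentially the same route as the paper: both identify $J(C)[2]$ with the subquotient of $\F_2^S$ spanned by the Weierstrass points and deduce both claims from the resulting mod-$2$ characteristic polynomial of Frobenius. The only difference is cosmetic: the paper replaces $\pi$ by $\pi^2$ so that both hypotheses become ``product of two disjoint odd cycles'' and then reads off eigenvalue multisets of $\Frob^2$ (i.e.\ works over $\F_{q^2}$), whereas you keep $\pi$ itself and treat the two cases separately via the factorization of $\prod_i (T^{d_i}-1)/(T-1)^2$ in $\F_2[T]$ --- and you are in fact more explicit than the paper about why reduction mod $2$ can only increase root multiplicities.
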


In the language of Dirichlet characters, this implies in particular the nonvanishing (at the central point) in the case of prime conductor of degree not divisible by $4$ and therefore gives an explicit set of size on order $X/\log X$ of Dirichlet characters of conductor at most $X$ which have $L$-functions nonvanishing at the critical point. See the statement below.

\begin{corollary}\label{quadcharacterCorollary}
Let $\chi$ be a quadratic character over $\mathbb{F}_q(x)$ with conductor $f \in \mathbb{F}_q[x]$. If $f$ is irreducible and $4 \nmid \deg f$, then $L(1/2,\chi) \ne 0$. 
\end{corollary}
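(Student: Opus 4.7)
The plan is to deduce Corollary~\ref{quadcharacterCorollary} from Theorem~\ref{SpecialCasefromMatrix} via the standard dictionary between quadratic characters of $\F_q(x)$ and hyperelliptic curves: to the irreducible conductor $f$ one attaches the curve $C : y^2 = f(x)$, whose zeta function, viewed as a rational function in $q^{-s}$, has numerator equal to $L(s,\chi)$. In particular, $L(1/2, \chi) \ne 0$ if and only if $Z_C(1/2) \ne 0$, so the corollary will follow once the Frobenius action $\pi$ on the Weierstrass points of $C$ is shown to satisfy one of the hypotheses of Theorem~\ref{SpecialCasefromMatrix}.

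Since $f$ is irreducible (hence squarefree) and $q$ is odd, the set of Weierstrass points of $C$ consists of the $\deg f$ geometric roots of $f$, augmented by the point at infinity when $\deg f$ is odd; there are $2g+2$ of them in all. Because $f$ is irreducible, Frobenius acts on its roots as a single cycle of length $\deg f$. Using $4 \nmid \deg f$, I split into two cases: if $\deg f$ is odd, then $\deg f = 2g+1$ and $\pi$ is the product of a $(2g+1)$-cycle on the roots of $f$ and the fixed point at infinity, i.e., a product of two disjoint cycles of odd (and in fact coprime) lengths; if $\deg f \equiv 2 \pmod 4$, then $\deg f = 2g+2$ with $g$ even, and $\pi$ is a single $(2g+2)$-cycle. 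In either case Theorem~\ref{SpecialCasefromMatrix} applies and gives $Z_C(1/2) \ne 0$.

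The main obstacle --- beyond setting up the hyperelliptic dictionary cleanly --- is the low-genus boundary $\deg f \in \{1,2,3\}$, where Theorem~\ref{SpecialCasefromMatrix} does not literally apply. For $\deg f \le 2$ the genus is $0$ and $L(s,\chi)$ has degree at most $1$ in $q^{-s}$, so nonvanishing at $s = 1/2$ is immediate. For $\deg f = 3$ the associated elliptic curve $L$-function is $1 - aq^{-s} + q^{1-2s}$ with $a \in \Z$; vanishing at $s = 1/2$ would force $a = 2\sqrt{q}$, and in particular $a$ to be even, contradicting $a \equiv 1 \pmod 2$, which follows by reducing the characteristic polynomial of Frobenius on $T_2$ modulo $2$ and using the fact that Frobenius permutes the three nontrivial $2$-torsion points as a $3$-cycle.
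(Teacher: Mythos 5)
Your proof is correct and follows essentially the same route as the paper's: attach the hyperelliptic curve $y^2=f(x)$, note that irreducibility of $f$ forces the Frobenius permutation of the Weierstrass points to be a single $(2g+2)$-cycle with $g$ even when $\deg f \equiv 2 \pmod 4$ and a product of two disjoint odd cycles when $\deg f$ is odd, and then invoke Theorem~\ref{SpecialCasefromMatrix}. Your additional treatment of the low-degree cases $\deg f \le 3$, where the genus-$\ge 2$ hypothesis of that theorem fails, addresses a boundary case the paper's proof passes over silently, and your mod~$2$ argument for $\deg f = 3$ is correct.
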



In particular, the number of quadratic characters with irreducible conductor of size at most $X$ whose $L$-function does not vanish at $s=1/2$ is $\gg X/\log X$ as $X \to \infty$.
This result improves on \cite[Corollary 2.6]{AK13} of Andrade and Keating and on \cite[Corollary 2.8]{ABJ16} of Andrade, Bae, and Jung, which give a proportion on order $(\log X)^{-2}$, and goes beyond the methods of \cite{AB18} by Andrade and Baluyot. For the analogous problem over $\mathbb{Z}$, we refer to the recent work \cite{BP18} of Baluyot and Pratt.

In fact, there is nothing special about hyperelliptic curves in Theorem~\ref{SpecialCasefromMatrix}.  A similar ``genus-theory" argument allows us to handle the case of cyclic $\ell$-covers of $\P^1$ for an odd prime $\ell$.

\begin{theorem}\label{ModpVersion}
	Let $\ell$ be an odd prime different from the characteristic of $\mathbb{F}_q$. Let $C$ be a smooth projective curve over $\mathbb{F}_q$ which admits a degree $\ell$ map $f: C \to \P^1_{\F_q}$ such that $f$ is Galois over $\mathbb{F}_q$ with Galois group isomorphic to $\Z/\ell\Z$. Let $S  \subset \P^1(\Fqbar)$ be the set of branch points of $f$. Let $\pi$ be the permutation induced by the Frobenius action on $S$, and suppose that $\pi$ is the composition of disjoint cycles of orders $k_1,k_2,\ldots, k_r$, all prime to $\ell$.  
	
	\begin{enumerate}
		\item Suppose the $k_i$ are mutually coprime and $r \leq 3$.  Then every zero of $Z_C$ has degree $\ell-1$.
		\item Define $\kappa_i$ to be $k_i$ if $k_i$ is odd and $k_i/2$ if $k_i$ is even.  Suppose that either
		\begin{itemize}
			\item $q$ is congruent to $1$ modulo $\ell$ and $r=2$, with both cycles of odd length; or
			\item There is no $i$ such that $q^{\kappa_i}$  is congruent to $1$ modulo $\ell$.
		\end{itemize}
		Then the point $s = \frac{1}{2}$ is not a zero of $Z_C$. 
	\end{enumerate}
\end{theorem}

We remark that this theorem, like Theorem~\ref{SpecialCasefromMatrix} above, can be used to produce a set of size on order $X/\log(X)^a$ of order-$\ell$ Dirichlet characters of conductor at most $X$ whose zeta functions are non-vanishing at $s=1/2$, for some power $a \in (0,1]$. See Corollary \ref{characterCorollary} for example.
This lower bound improves, for $\ell=3$, upon Corollary 1.3 of recent work by David--Florea--Lalin \cite{davidflorealalin} which gives a lower bound of the form $X^{1 - \epsilon}$ (for any $\epsilon > 0$).

\begin{corollary} \label{characterCorollary}
Let $\ell$ be an odd prime different from the characteristic of $\mathbb{F}_q$. Let $d$ be the order of $(q \bmod \ell)$ in $(\mathbb{Z}/\ell\mathbb{Z})^*$. Let $N(X)$ be the number of primitive degree $\ell$ Dirichlet characters $\chi_f:( \mathbb{F}_q[t]/f)^* \to \mathbb{C}^*$ with conductor $f$ satisfying $ q^{\deg f} \le X$ and $L(1/2,\chi_f) \ne 0$. We show the following:
\begin{itemize}
    \item if $d=1$, then $N(X) \gg \frac{X}{\log X}$ for $X \to \infty$;
    \item if $d$ is even, then $N(X) \gg \frac{X}{(\log X)^{1-\frac{\ell-1}{2d}}}$ for $X \to \infty$. In particular, since $d \mid \ell-1$, we have $N(X) \gg \frac{X}{(\log X)^{1/2}}$ for $X \to \infty$.
\end{itemize}
\end{corollary}

We do not get a lower bound of this quality in the case where $q$ has odd order modulo $\ell$.

The main idea that connects all the theorems in this paper is the study of $L$-functions modulo $\ell$.  The value of an $L$-function over $\F_q(x)$ at a complex number $s$ can be expressed as a polynomial $P(T) \in \Z[T]$ evaluated at $T = q^{-s}$.  So if we want to prove that $P(T)$ is nonvanishing, it suffices to prove that $P(T)$ is nonvanishing modulo $\ell$ for some prime $\ell$.  For Theorem~\ref{FirstRes}, we will show that, for suitably chosen $\ell$, the vanishing mod $\ell$ of the $L$-function is related to the dimension of a certain Frobenius eigenspace in the $\ell$-torsion of a hyperelliptic Jacobian over $\F_q$; the average size of this eigenspace can then be controlled by a point count on a moduli space over a finite field, 
which is a modest generalization and explication of the arguments in \cite{evw} and \cite{lt} respectively. 
For Theorem~\ref{SpecialCasefromMatrix}, on the other hand, we argue that under the given condition on Weierstrass points the $L$-function of $\chi_f$ is nonvanishing mod $2$ at $s=1/2$. For the similar Theorem~\ref{ModpVersion}, the $\ell$ is again the order of the Dirichlet character in question.

\subsection*{Acknowledgments} The authors are grateful to Chantal David, Zeev Rudnick, Alexandra Florea, and Emmanuel Kowalski for helpful comments and suggestions.  The first author was partially supported by NSF grant DMS-1700885 and by a fellowship from the Simons Foundation. We thank the referees for the valuable feedback and comments.

\section{Main Theorem and Proof}

\subsection{Setup and Notations}\label{Notation}

Throughout the paper, $\F_q$ is a finite field of odd characteristic $p$. Let $Q_{n,q}$ be the set of squarefree polynomials over $\F_q$ of degree $n$. For each $f \in Q_{n,q}$, write $J_f$ for the Jacobian of the hyperelliptic curve $$y^2=f(x)$$ and $P_f(x) \in \Z[x]$ for the characteristic polynomial of geometric Frobenius acting on the $\ell$-adic Tate module of $J_f$.  Let $\ell$ be a prime not equal to the characteristic of $\F_q$ and let $a$ be an element of $(\Z/\ell\Z)^*$. The elements $R$ of $J_f[\ell](\Fqbar)$ which satisfy
\begin{equation*} \label{FrobActEq}
\Frob_q \cdot R = aR.
\end{equation*}
form a finite-dimensional vector space over $\Z/\ell \Z$ and we denote by $m_a(f)$ the number of nonzero elements of this vector space. Note that $m_1(f)$ is just the number of $\F_q$-rational nontrivial $\ell$-torsion points of $J_f$. Let $Q_{n,q}^{a,\ell}$ be the set of squarefree polynomials $f$ over $\F_q$ of degree $n$ such that $m_a(f)$ is greater than $0$. 

Let $\alpha$ be a $q$-Weil number of weight $1$ with minimal polynomial $g_{\alpha}(x) \in \Z[x]$.
Namely, it is an algebraic integer whose absolute values under all complex embeddings equal $\sqrt{q}$. Let $Q_{n,q}^{\alpha}$ be the subset of $Q_{n,q}$ defined by $\{ f \in Q_{n,q} \mid P_f(\alpha^{-1}) = 0  \}$.  With notation introduced as above, if $g_{\alpha}(a) = 0 \bmod \ell$, then $|Q_{n,q}^{\alpha}| \le |Q_{n,q}^{a,\ell}|$.

\subsection{Rational points on twisted Hurwitz spaces over finite fields}

Our main tool will be the following result about the average size of the subspace of $\Jac(C)[\ell](\Fqbar)$ on which Frobenius acts by some specified scalar $a$, as $C$ ranges over hyperelliptic curves over $\F_q$.  More precisely, we study the variation as we range over $y^2 = f(x)$ with $f$ ranging over squarefree polynomials in $\F_q[x]$; this amounts to the same, since each isomorphism class of hyperelliptic curves is represented in this form the same number of times (assuming, of course, that the isomorphism classes are weighted inversely to the number of automorphisms they possess.)

\begin{proposition}
Let $a \in \left( \Z/\ell \Z \right)^\times$. With notation as in Section \ref{Notation}, there exist constants $C_\ell,N_\ell,Q_\ell$ only depending on $\ell$ such that 
\beq
\left\lvert \frac{\sum_{f \in Q_{n,q}} m_a(f)}{|Q_{n,q}|} -1 \right\rvert \leq C_\ell q^{-1/2}
\eeq
for all $n \geq N_\ell$ and $q \geq Q_\ell$.
\label{prop:twistedhurwitz}
\end{proposition}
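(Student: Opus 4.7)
The plan is to realize the numerator as an $\F_q$-point count on a moduli space and then invoke homological stability together with Deligne's Weil bounds to extract a main term.

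First I would translate the sum into a point count. A nonzero $R \in J_f[\ell](\Fqbar)$ corresponds, via Kummer theory, to an étale $\Z/\ell$-cover $\tilde C_f \to C_f$ of the hyperelliptic curve $y^2 = f(x)$. Since the hyperelliptic involution acts as $-1$ on $J_f$ and $\ell$ is odd, composition with the hyperelliptic map gives a $D_\ell = \Z/\ell \rtimes \Z/2$ Galois cover of $\P^1$ branched exactly over the $\deg f + \epsilon$ Weierstrass points (with $\epsilon \in \{0,1\}$ the parity correction at $\infty$), and the inertia in each ramified fiber is generated by a reflection. The condition $\Frob_q R = aR$ picks out a specific $\F_q$-form of this cover, twisted by the character of Frobenius with eigenvalue $a \in (\Z/\ell)^\times$. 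Packaging this, one obtains a smooth $\F_q$-variety $\HH_{n,a}$ of dimension $n$ (a twisted variant of the classical Hurwitz space parametrizing branched $D_\ell$-covers) such that
\beq
\sum_{f \in Q_{n,q}} m_a(f) = \#\HH_{n,a}(\F_q).
\eeq

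Next I would apply the Grothendieck--Lefschetz trace formula to $\HH_{n,a}$ and control the resulting cohomological sum. The homological stability theorem of \cite{evw} (or rather its generalization to the $D_\ell$ situation with a twisting character) shows that for $n \geq N_\ell$ the $\ell$-adic Betti numbers $\dim H^i_c(\HH_{n,a} \otimes \Fqbar, \Q_\ell)$ are bounded uniformly in $n$ by constants depending only on $\ell$. Using the explicit component count and top-weight identification in the style of \cite{lt}, adapted to this twisted setting, I would verify that the top cohomology $H^{2n}_c$ is one-dimensional with Frobenius eigenvalue $q^n$, and that the next-to-top contribution cancels the $-q^{n-1}$ coming from the non-squarefree locus, so that the leading part of $\#\HH_{n,a}(\F_q)$ matches $|Q_{n,q}| = q^n - q^{n-1}$ exactly. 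Deligne's purity bound then controls all lower cohomology by $C_\ell q^{n-1/2}$, and dividing by $|Q_{n,q}| \sim q^n$ yields the stated estimate $C_\ell q^{-1/2}$.

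The main obstacle is the second step: the stability result of \cite{evw} gives uniform Betti bounds but does not compute the stable cohomology, and I would need the braid-orbit analysis adapted from \cite{lt} to verify that the twisted Hurwitz space $\HH_{n,a}$ has exactly one geometric component defined over $\F_q$ with the correct Frobenius action on $H^{2n}_c$, as well as the right subleading contribution. This combinatorial/group-theoretic input, namely counting braid orbits on tuples of reflections in $D_\ell$ whose associated $\F_\ell$-representation has a Frobenius-eigenvector of eigenvalue $a$, is where the delicate work lies; everything else (the cover-theoretic moduli interpretation and the trace-formula/Weil-bound mechanics) is relatively standard once that identification is in hand.
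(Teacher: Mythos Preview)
Your overall strategy is the paper's: interpret the sum as a point count on a twisted dihedral Hurwitz space, use Grothendieck--Lefschetz, get the main term from geometric irreducibility, and bound the rest by the Betti-number estimates of \cite{evw} plus Deligne. Two details in your write-up, however, do not match what actually happens and would cause trouble if carried out as written.

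First, the point-count identity is not quite $\sum_f m_a(f)=\#\HH_{n,a}(\F_q)$. The Hurwitz space parametrizes covers with a marked branch divisor, hence corresponds to \emph{monic} $f$; moreover an isomorphism of $D_\ell$-covers identifies the surjection $s\colon J_f[\ell]\twoheadrightarrow\Z/\ell$ with $-s$, so a rational point over a fixed monic $f$ contributes $\tfrac12\bigl(m_a(f)+m_{-a}(f)\bigr)$, not $m_a(f)$. The paper repairs both points at once by summing over all leading coefficients: twisting a monic $F$ by a nonresidue swaps $m_a$ and $m_{-a}$, so $\sum_{f\in Q_{n,q}} m_a(f)=\sum_{f\in Q_{n,q}} m_{-a}(f)$ and one obtains $\sum_{f\in Q_{n,q}} m_a(f)=(q-1)\,\#X_n^a(\F_q)$. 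Without this $(q-1)$ and the $\pm a$ averaging your formula is off and the subsequent arithmetic cannot close.

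Second, there is no ``next-to-top cancellation with the $-q^{n-1}$ from the non-squarefree locus.'' The branch locus in the Hurwitz space is already reduced, so no such locus enters; and the paper does not compute $H^{2n-1}_c$ at all. Once one knows $(X_n)_{\Fqbar}$ is irreducible, $H^{2n}_c$ gives exactly $q^n$, the remaining terms are bounded by $C_\ell q^{n-1/2}$ via the exponential Betti bound from \cite{evw} and Deligne, and dividing by $|Q_{n,q}|/(q-1)=q^n-q^{n-1}$ yields $1+O(q^{-1/2})$ with no further identification needed (the stray $q^{-1}$ is already $O(q^{-1/2})$). The irreducibility itself is not obtained by a braid-orbit count in the style of \cite{lt}; the paper invokes a big-monodromy theorem of J.~K.~Yu (via \cite{evw}) for odd $n$ and reduces even $n$ to odd by restricting to configurations containing a fixed point.
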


\begin{remark} 
\normalfont While this paper was in proof, we learned that Proposition~\ref{prop:twistedhurwitz} follows from the proof of Theorem 1.1 of \cite{lt}, which is in fact more general, and the arguments used are essentially the same as those here.  We have left the proof of Proposition~\ref{prop:twistedhurwitz} in the present paper because the form in which we present the proof here is conducive to proving the explicit bounds for the stable range obtained in Proposition~\ref{prop:explicitbounds} below.  It would be interesting to address the questions of explicit bounds for the stable range in the more general situations considered by \cite{lt}, where the group-theoretic part of the proof of Proposition~\ref{prop:explicitbounds} would presumably be more complicated.
\end{remark}

When $a=1$ and $n$ is odd, Proposition~\ref{prop:twistedhurwitz} is essentially Theorem 8.8 of \cite{evw}, and indeed the proof here is a modification of the proof of that theorem.

The reader may note that \cite[Thm 8.8]{evw} requires not only that $q$ is not a multiple of $\ell$ but that $q$ is not congruent to $1$ modulo $\ell$.  We face no such restriction here.  That's because \cite[Thm 8.8]{evw} computes arbitrary moments of the Cohen-Lenstra distribution, whereas we are only studying the analogue of the average size of the $\ell$-part of the class group.  In the language of  \cite[Thm 8.8]{evw}, we are only considering the case $A = \Z/\ell\Z$.  The difference is as follows.  In the proof, we will end up estimating the number of $\F_q$-points on a moduli space over $\F_p$, and the result will depend on that space having just one geometrically irreducible component defined over $\F_q$.  In the more general setting treated in  \cite[Thm 8.8]{evw}, that space has many geometric components, all but one of which have fields of definition containing $\mu_\ell$; so when $q$ is congruent to $1$ mod $\ell$ there are multiple $\F_q$-rational components.  In the case treated here, the moduli space in question is geometrically irreducible, so this issue does not arise.

\begin{proof}
We begin by observing that $\sum_{f \in Q_{n,q}} m_a(f)$ can be interpreted as the number of $\F_q$-rational points of a certain moduli space.  

To this end we briefly recall the setup of  \cite[Section 7]{evw}. 

Let $k$ be a field, let $G$ be a finite group with trivial center, denote by $e$ the identity element of $G$, and let $c$ be a conjugacy-closed subset of $G \backslash e$.  By a {\em tame $G$-cover of $\P^1$ with monodromy type $c$} we mean a triple $(X,f,\phi)$ where
\begin{itemize}
\item  $X$ is a smooth proper geometrically connected curve $X/k$;
\item $f: X \ra \P^1$ is a tamely ramified finite cover;
\item The image of tame inertia at each branch point of $f$ excepting $\infty$ lies in $c$;
\item $f$ is Galois with group $G$; that is, $\Aut(f)$ acts transitively on the geometric fibers of $f$ and $\phi$ is an automorphism from $G$ to $\Aut(f)$.
\end{itemize}

Here by an isomorphism between two covers $f: X \ra \P^1$ and $f': X' \ra \P^1$ we mean a morphism $\psi: X \ra X^{'}$ with $f' \circ \psi = f$, not a pair $(\psi,\iota)$ with $\iota$ a nontrivial automorphism of $\P^1$ and $f' \circ \psi = \iota \circ f$.  In other words, our $\P^1$ is ``labeled''.

Then, as in \cite[Section 7]{evw} (more or less immediate from a theorem of Romagny and Wewers~\cite{romagnywewers}), there is a scheme $\Hn_{G,n}^c$ over $\Z[1/|G|]$ whose $k$-points (as long as $k$ has characteristic prime to $|G|$) are in bijection with the isomorphism classes of tame $G$-covers of $\P^1$ which have $n$ branch points on $\A^1$ with monodromy type $c$. (We do not specify whether or how the cover is branched at $\infty$.)\footnote{The somewhat artificial special treatment of $\infty$ in this definition, as in \cite{evw}, stems from the need to compare with topology, where branched covers of the disc are technically easier to handle than branched covers of the sphere.}  In fact  (\cite[Theorem 2.1]{romagnywewers}), for a scheme $S$, the set $\Hn_{G,n}^c(S)$ corresponds to isomorphism classes of tame $G$-covers over $S$, suitably defined; we will not need to spell out that definition here. Once the $n$ branch points are chosen on $\A^1$ there are finitely many choices for $f$ and $\phi$. Thus the dimension of $\Hn_{G,n}^c$ equals to $n$.

From now on, we suppose that $k$ is $\F_q$, that $G$ is the dihedral group  $\Z/\ell \Z \rtimes \Z/2\Z$, and that $c$ is the conjugacy class of an involution in $G$. We will now explain the relationship between the space of $G$-covers and the $\ell$-torsion in the Jacobian of  hyperelliptic curves.  The key point is that, for any algebraic curve $C$, the set of surjections $\Jac(C)[\ell] \ra (\Z/\ell\Z)^k$ is naturally identified with the set of \'{e}tale $(\Z/\ell\Z)^k$ covers of $C$. For details, see Section 3.9 of \cite{Milne}. 

If $f: X \ra \P^1$ is a $G$-cover, the product structure of $G$ allows us to factor $f$ as
\beq
X \stackrel{g}{\ra} C \stackrel{h}{\ra} \P^1
\eeq
where $h$ is a hyperelliptic cover and $g$ is a Galois cover with group $\Z/\ell\Z$; that is, $g$ is endowed with an isomorphism $\phi: \Z/\ell\Z \ra \Aut(g)$.  What's more, the fact that the monodromy in $f$ is of type $c$ implies that $g$ is an \'{e}tale cover, at least away from the points of $C$ over $\infty \in \P^1$.  

What happens over $\infty$ is slightly more delicate. The double cover $h$ is branched at $n$ points on $\A^1$, but the total number of branch points of $h$ must be even as $C$ is a smooth proper hyperelliptic curve.  Thus, if $n$ is odd, $h$ is branched at $\infty$.  The monodromy around $\infty$ in the cover $X \ra \P^1$ is thus an element of $G$ projecting to the nontrivial element of $\Z/2\Z$.  Such an element must be an involution, and it follows that $g$ is unramified at $\infty$.  If $n$ is even, on the other hand, it is possible for $g$ to be ramified.  We thus wish to restrict our attention to those $G$-covers $X \ra C$ which are unramified over $\infty$.  These are parametrized by a closed and open subscheme of $\Hn_{G,n}^c$ (indeed, it is the second term in the disjoint union in the paragraph following (7.3.1) of \cite{evw}). Let $X_n$ be this subscheme of $\Hn_{G,n}^c$ when $n$ is even, and $\Hn_{G,n}^c$ when $n$ is odd. In both cases, $\dim X_n = n$. We have explained how every point of $X_n(k)$ gives rise to a triple $(g,\phi,h)/k$ up to isomorphism, and in fact it is not hard to check that the converse holds as well.  (This is essentially the last paragraph of the proof of \cite[Proposition 8.7]{evw}.) 

If $a$ is an element of $(\Z/\ell\Z)^*$, we denote by $\a$ the automorphism of $X_n$ which sends $(g,\phi,h)$ to $(g,a\phi,h)$.  We then write $X_n^a$ for the twist of $X_n$ by the homomorphism $$\Gal(\Fqbar/\F_q) \ra \Aut(X_n)$$ which sends $\Frob_q$ to $a$.  (Reference:  \cite[\S 4.5]{poonen:rp}.)  

\begin{lemma} With notation as in Section \ref{Notation}, 
\beq
\sum_{f \in Q_{n,q}} m_a(f) = (q-1)|X_n^a(\F_q)|
\eeq
\end{lemma}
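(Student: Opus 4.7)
The plan is to realize both sides as $\F_q$-point counts on moduli schemes and identify the ratio $(q-1)$ as the cardinality of $\Gm(\F_q)$ arising from a $\Gm$-torsor.

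Let $Q_n \subset \A^{n+1}$ be the open subscheme of coefficient vectors of squarefree polynomials of degree $n$, so that $Q_n(\F_q) = Q_{n,q}$ and there is a universal hyperelliptic Jacobian $\mathcal{J} \to Q_n$. Let $Y_n$ be the complement of the zero section in $\mathcal{J}[\ell]$, viewed as a finite \'etale scheme over $Q_n$. The group $(\Z/\ell)^\times$ acts on $Y_n$ by fiberwise scalar multiplication on $\mathcal{J}[\ell]$, and let $Y_n^a$ denote the twist of $Y_n$ by the homomorphism $\Gal(\bar\F_q/\F_q) \to (\Z/\ell)^\times$ sending $\Frob_q \mapsto a$. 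Unwinding, an $\F_q$-point of $Y_n^a$ is precisely a pair $(f, R)$ with $f \in Q_{n,q}$ and $R \in J_f[\ell](\bar\F_q) \setminus \{0\}$ satisfying $\Frob_q R = aR$; hence $|Y_n^a(\F_q)| = \sum_{f \in Q_{n,q}} m_a(f)$.

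Next I construct a morphism $\pi \colon Y_n \to X_n$ sending $(f, R)$ to the $G$-cover $X \to C_f \to \P^1$, where $C_f \colon y^2 = f(x)$, the $\Z/\ell$-\'etale cover $g \colon X \to C_f$ is the one classified by $R$, and the trivialization $\phi_R \colon \Z/\ell \xrightarrow{\sim} \Aut(g)$ is the one induced by $R$. I claim $\pi$ is a $\Gm$-torsor for the action
\[
c \cdot (f, R) \;=\; (c^2 f,\; \psi_{c,*} R),\qquad \psi_c \colon C_f \xrightarrow{\sim} C_{c^2 f},\;(x,y) \mapsto (x, cy).
\]
For freeness, any $c$ stabilizing $(f, R)$ must satisfy $c^2 = 1$ and $\psi_{c,*} R = R$; since $\psi_{-1}$ is the hyperelliptic involution, acting as $-1$ on $J_f$, and $R \neq 0$ with $\ell$ odd, this forces $c = 1$. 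For transitivity on geometric fibers, observe that any isomorphism $C_f \to C_{f'}$ over $\P^1$ is of the form $\psi_c$ or $\iota \circ \psi_c = \psi_{-c}$ for a unique $c \in \bar\F_q^\times$ with $f' = c^2 f$; if such an isomorphism lifts to a $G$-cover isomorphism relating $(f, R)$ to $(f', R')$, then $R' = \psi_{c,*} R$ or $R' = -\psi_{c,*} R$, placing $(f', R')$ in the $\Gm$-orbit of $(f, R)$. The $\Gm$-action commutes with the $(\Z/\ell)^\times$-action, so $\pi$ descends after twisting to a $\Gm$-torsor $Y_n^a \to X_n^a$.

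Finally, Hilbert 90 gives $H^1(\F_q, \Gm) = 0$, so every $\Gm$-torsor over a point of $X_n^a(\F_q)$ is trivial and contributes $|\Gm(\F_q)| = q-1$ points to $Y_n^a(\F_q)$. Thus $|Y_n^a(\F_q)| = (q-1)|X_n^a(\F_q)|$, which combined with the identification of the left-hand side yields the lemma. The main technical step is the transitivity check: it relies crucially on the fact that the hyperelliptic involution $\iota$ belongs to $G = \Z/\ell \rtimes \Z/2$, so a $G$-cover isomorphism cannot absorb $\iota$ independently of the scaling $f \mapsto c^2 f$ — precisely what produces the factor $(q-1)$ rather than $(q-1)/2$.
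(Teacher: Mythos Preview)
Your argument is correct and takes a genuinely different route from the paper's.  The paper proceeds pointwise: it shows that for each \emph{monic} $F$ the number of $\F_q$-points of $X_n^a$ lying over the associated double cover $h$ equals $\tfrac{1}{2}\bigl(m_a(F)+m_{-a}(F)\bigr)$, the factor $\tfrac{1}{2}$ arising because two surjections $s,-s:J(C)[\ell]\twoheadrightarrow\Z/\ell\Z$ give isomorphic $G$-covers (conjugation by an involution in $G$ inverts $\Z/\ell\Z$).  It then sums over all $f=\epsilon F$ with $\epsilon\in\F_q^\times$, using that $m_a(\epsilon F)$ equals $m_a(F)$ or $m_{-a}(F)$ according as $\epsilon$ is a square, together with the symmetry $\sum_f m_a(f)=\sum_f m_{-a}(f)$ under $f\mapsto(\text{nonresidue})f$, to manufacture the factor $(q-1)$.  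Your approach packages both steps into a single geometric statement: the assignment $(f,R)\mapsto[G\text{-cover}]$ is a $\Gm$-torsor for the action $c\cdot(f,R)=(c^2 f,\psi_{c,*}R)$, and Hilbert~90 delivers $(q-1)$ at once.  The paper's $\pm$-ambiguity and its residue/nonresidue case split are simultaneously absorbed into the observation that $-1\in\Gm$ acts as $(f,R)\mapsto(f,-R)$ via the hyperelliptic involution.  What you gain is a cleaner, coordinate-free explanation of where $(q-1)$ comes from; what the paper gains is that it never leaves the level of explicit point counts, so it need not verify that $\pi$ is a scheme morphism or that the torsor structure holds in families.  One small remark: your parenthetical about isomorphisms being ``of the form $\psi_c$ or $\iota\circ\psi_c=\psi_{-c}$'' is redundant, since the latter are already among the $\psi_c$; but this does no harm to the argument.
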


\begin{proof}

A point of $X_n^a(\F_q)$ is a point of  $X_n(\Fqbar)$ such that $\Frob_q \cdot x = \a \cdot x$.  In other words, it is a triple $(g,\phi,h)/\Fqbar$ such that $\Frob_q \cdot (g,\phi,h)$ is isomorphic to $(g,a\phi,h)$.  The fact that the isomorphism class of $h$ is fixed by Frobenius implies that the branch locus of $h$ is an $\F_q$-rational divisor. Let $f(x) \in \mathbb{F}_q[x]$ be the unique monic squarefree polynomial which vanishes precisely at the branch locus of $h$. Then $C$ is isomorphic (over $\Fqbar$) to the smooth completion of the hyperelliptic curve defined by $y^2 = f(x)$.

Fixing such an $h$,  and thus such a $C$, we now consider the set of points of $X_n^a(\F_q)$ lying over this $h$.  First of all, the choices of $(g,
\phi)$ such that $(g,\phi,h) \in X_n^a(\Fqbar) = X_n(\Fqbar)$ for a specified $h$ are in bijection with the $\ell^{2g(C)}-1$ surjections from $J(C)[\ell](\Fqbar)$ to $\Z/\ell\Z$.  Two such surjections $s,s'$ are isomorphic (that is, are parametrized by the same point of $X_n^a(\Fqbar)$) if and only if $s = \pm s'$.  The action of Frobenius on the set of surjections sends $s$ to $a^{-1} \Frob_q s$; so $s$ descends to a point of $X_n^a(\F_q)$ if and only if $\Frob_q \cdot s = \pm a s$.  We conclude that the number of points of $X_n^a(\F_q)$ lying over $h$ is $(1/2)(m_a(f) + m_{-a}(f))$. 

Now if $f$ in $Q_{n,q}$ is {\em not} monic then $f = \epsilon F$ for some $\epsilon \in \F_q^*$ and some monic $F$.  The curve $C_f$ is isomorphic to $C_F$ if $\epsilon$ is a quadratic residue and to the nontrivial quadratic twist of $C_F$ otherwise.  In the former case, $m_a(f) = m_a(F)$, and in the latter, $m_a(f) = m_{-a}(F)$.  In particular, the quantity $(1/2)(m_a(f) + m_{-a}(f))$ is the same for all $q-1$ nonzero multiples of $F$.  We conclude that

\beq
\sum_{f \in Q_{n,q}}(1/2)(m_a(f) + m_{-a}(f)) = (q-1)|X_n^a(\F_q)|
\eeq

Moreover, taking $\epsilon$ to be a non-residue in $\F_q^*$,
\beq
\sum_{f \in Q_{n,q}} m_a(f) = \sum_{f \in Q_{n,q}} m_a(\epsilon f) = \sum_{f \in Q_{n,q}} m_{-a}(f)
\eeq
from which we obtain
\beq
\sum_{f \in Q_{n,q}} m_a(f) = (q-1)|X_n^a(\F_q)| 
\eeq
as desired.
\end{proof}

We now argue exactly as in the proof of \cite[Theorem 8.8]{evw}.

Since $|Q_{n,q}| = (q-1)(q^n-q^{n-1})$, it suffices to prove that
\begin{equation}
|q^{-n}|X_n^a(\F_q)| - 1| \leq C_\ell q^{-1/2}
\label{eq:xnform}
\end{equation}

for some $C_\ell$ depending only on $\ell$ and for all $n > N_\ell, q > Q_\ell$. 

Via the Grothendieck-Lefschetz trace formula, we have
\begin{equation}
|X_n^a(\F_q)| = \sum_i (-1)^i \Tr(\Frob_q | H^i_{c,\text{\'et}}((X_n^a)_{\Fqbar}; \Q_\lambda).
\label{eq:gl}
\end{equation}

where $\lambda$ is a prime greater than $\max \{2\ell,q,n\}$.  

Note that the \'etale cohomology is that of the base change of $X_n^a$ to $\Fqbar$, where it becomes isomorphic to the untwisted space $X_n$; in particular, the choice of $a$ affects the action of Frobenius on the \'etale cohomology, but not the \'etale Betti numbers, bounds on which are the main engine of the argument.

We begin by computing the main term:
\beq
\Tr(\Frob_q | H^{2n}_{\text{c,\'et}}((X_n^a)_{\Fqbar}; \Q_\lambda) = q^n.
\eeq
This follows immediately from the fact that $(X_n^a)_{\Fqbar} \cong (X_n)_{\Fqbar}$ is irreducible.  When $n$ is odd, this is shown in the proof of \cite[Theorem 8.8]{evw} as a consequence of a big monodromy theorem of J.K. Yu.  (This is actually the only place where we need $n$ to be large, and indeed $n=3$ would be enough.)  When $n$ is even, we argue as follows.  The map from $X_n$ to the configuration space $\Conf^n \A^1$ sending a $G$-cover to its branch locus is a finite cover (\cite[Section 2.2]{evw}), and irreducibility of $X_n$ is equivalent to the monodromy group of this cover acting transitively on the fiber.  It suffices to check that this holds on a closed subvariety of the base.  So write $Z$ for the subvariety of $\Conf^n \A^1$ consisting of those configurations containing some specified point $p_0 \in \P^1(F_q)$, and let $Y$ be the preimage of $Z$ in $X_n$.  An automorphism of $\P^1$ taking $p_0$ to $\infty$ now identifies $Y$ with $\Hn_{G,n-1}^c$, which we know to be irreducible since $n-1$ is odd.  This implies that $X_n$ is irreducible.

We now turn to the error term.
  The moduli space $X_n$ is a closed and open subscheme of $\Hn_{G,n}^c$, so its Betti numbers are bounded by those of $\Hn_{G,n}^c$; by \cite[(7.8.1)]{evw} we have
\beq
\dim H^{2n-i}_{\text{c,\'et}}((X_n^a)_{\Fqbar}; \Q_\lambda) \leq K_\ell (B_\ell)^i
\eeq
where $K_\ell,B_\ell$ are constants depending only on $\ell$. 

Using the Deligne bound \cite{Deligne}, the eigenvalue of Frobenius on $H^{i}_{c,\text{\'et}}((X_n^a)_{\Fqbar}; \Q_\lambda)$ is bounded in absolute value by $q^{i/2}$; so the absolute value of the contribution of all $i < 2n$ to \eqref{eq:gl} is bounded above by the sum of a geometric series which converges for all $q > B_\ell^2$. In particular, as in \cite[Section 1.8]{evw}, this contribution is at most
\beq
K_\ell B_\ell q^{-1/2}(1-B_\ell q^{-1/2})^{-1}q^n.
\eeq
So if we take $Q_\ell = 4 B_\ell^2$ and $q > Q_\ell$, we may take $C_\ell = 2 K_\ell B_\ell$ and conclude
\beq
|X_n^a(\F_q) - q^n|  = |\sum_{i=0}^{2n-1} (-1)^i \Tr(\Frob_q | H^i_{c,\text{\'et}}((X_n^a)_{\Fqbar}; \Q_\lambda)| < C_\ell q^{n-1/2}
\eeq
which proves \eqref{eq:xnform} and thus the desired result.

\end{proof}

Proposition~\ref{prop:twistedhurwitz} allows us to bound the proportion of hyperelliptic curves whose \'{e}tale cohomology has a Frobenius eigenvalue congruent to $a$ mod $\ell$.  Recall from Section~\ref{Notation} that  $Q_{n,q}^{a,\ell}$ is the set of squarefree polynomials over $\F_q$ of degree $n$ such that $m_a(f)$ is greater than $0$.

\begin{corollary} \label{MainCoro}
There are constants $C'_\ell,Q_\ell,N_\ell$ such that for any
$a \in \left( \Z / \ell \Z \right)^\times$ we have
\beq
\frac{|Q_{n,q}^{a,\ell}|}{|Q_{n,q}|} \leq \frac{1}{\ell -1}+C'_{\ell} q^{-1/2}
\eeq
for all $n \geq N_\ell,q \geq Q_\ell$.
\end{corollary}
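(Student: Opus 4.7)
The plan is to deduce the corollary directly from Proposition~\ref{prop:twistedhurwitz} by a simple Markov-type inequality. The key algebraic observation is that the set of $R \in J_f[\ell](\Fqbar)$ satisfying $\Frob_q \cdot R = aR$ is a $\Z/\ell\Z$-vector space, so whenever it is nontrivial its cardinality is a positive power of $\ell$ and hence at least $\ell$. Thus for each $f \in Q_{n,q}$ either $m_a(f) = 0$ or $m_a(f) \geq \ell - 1$; in particular $m_a(f) \geq \ell - 1$ for every $f \in Q_{n,q}^{a,\ell}$.

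Summing this lower bound over $Q_{n,q}^{a,\ell}$ and using the nonnegativity of $m_a$ on the complement yields
\beq
(\ell - 1)|Q_{n,q}^{a,\ell}| \leq \sum_{f \in Q_{n,q}} m_a(f).
\eeq
Proposition~\ref{prop:twistedhurwitz} supplies the matching upper bound $\sum_{f \in Q_{n,q}} m_a(f) \leq (1 + C_\ell q^{-1/2})|Q_{n,q}|$, valid for $n \geq N_\ell$ and $q \geq Q_\ell$. Dividing through by $(\ell - 1)|Q_{n,q}|$ yields the claimed inequality with $C'_\ell := C_\ell/(\ell - 1)$ and the same thresholds $N_\ell$, $Q_\ell$.

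There is no substantive obstacle once Proposition~\ref{prop:twistedhurwitz} is in hand; the corollary is essentially a one-line bookkeeping step converting an averaged bound into a counting bound via the discreteness of eigenspace sizes. The factor $1/(\ell - 1)$ in the conclusion reflects precisely that a nontrivial $a$-eigenspace contributes at least $\ell - 1$ to the sum, and equality in this Markov step would correspond to the ``generic'' situation in which $m_a(f)$ is either $0$ or exactly $\ell - 1$.
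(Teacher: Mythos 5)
Your argument is correct and is essentially identical to the paper's proof: both use the fact that a nontrivial $a$-eigenspace has at least $\ell-1$ nonzero elements to get $(\ell-1)|Q_{n,q}^{a,\ell}| \leq \sum_{f} m_a(f)$, then apply Proposition~\ref{prop:twistedhurwitz} and divide, taking $C'_\ell = C_\ell/(\ell-1)$. No issues.
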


\begin{proof}
Write $\delta$ for the quantity $|Q_{n,q}|^{-1} |Q_{n,q}^{a,\ell}|$ to be bounded.

Since $m_a(f)$ is the number of nonzero elements of a vector space over $\Z/\ell\Z$, it is at least $\ell-1$ if it is greater than $0$.  In particular,
\beq
|Q_{n,q}|^{-1} \sum_{f \in Q_{n,q}} m_a(f) \geq |Q_{n,q}|^{-1} (\ell-1)|Q_{n,q}^{a,\ell}| =(\ell-1)\delta
\eeq
By Proposition~\ref{prop:twistedhurwitz}, we now have
\beq
(\ell - 1)\delta < 1+ C_\ell q^{-1/2}
\eeq
for all sufficiently large $n,q$, which yields the desired result by taking $C'_\ell = C_\ell / (\ell-1)$.
\end{proof}

So far, we have used the results of \cite{evw} as they appear in that paper.  However, for the present application, it is useful to compute explicit values $C_\ell,Q_\ell$ for which Corollary~\ref{MainCoro} holds.  We do so by going back to the main body of \cite{evw} and working out explicit bounds for quantities that are given in \cite[(7.8.1)]{evw} only as unspecified constants.

\begin{proposition} Corollary~\ref{MainCoro} holds with $C'_\ell =2 (\ell - 1)^{-1} (4 \ell)^{138}$ and $Q_\ell = 4 \cdot (4 \ell)^{156}$.
\label{prop:explicitbounds}
\end{proposition}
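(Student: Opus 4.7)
The plan is to make fully explicit the constants $K_\ell$ and $B_\ell$ that appear in \cite[(7.8.1)]{evw} when specialized to our setting: the dihedral group $G = \mathbb{Z}/\ell\Z \rtimes \Z/2\Z$ together with the conjugacy class $c$ of its $\ell$ involutions. Once such explicit values are in hand, the proof of Proposition~\ref{prop:twistedhurwitz} reads off $C_\ell = 2 K_\ell B_\ell$ and $Q_\ell = 4 B_\ell^2$ verbatim, whence $C'_\ell = 2(\ell-1)^{-1} K_\ell B_\ell$ from Corollary~\ref{MainCoro}. To land on the stated bounds it therefore suffices to show $B_\ell \le (4\ell)^{78}$ and $K_\ell \le (4\ell)^{60}$, so that $K_\ell B_\ell \le (4\ell)^{138}$ and $B_\ell^2 \le (4\ell)^{156}$.

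First I would revisit the derivation of \cite[(7.8.1)]{evw}. That bound on $\dim H^{2n-i}_{c,\text{\'et}}((X_n^a)_{\Fqbar};\Q_\lambda)$ is obtained by comparison with the singular cohomology of the complex-analytic Hurwitz space, to which the central stability and homological stability machinery of \cite[Sections 4--6]{evw} applies. In that machinery $B_\ell$ controls the stable range and $K_\ell$ the number of generators of the relevant stable module; both can be bounded in terms of $|c|$, $|G|$, combinatorial invariants of the braid group action on tuples in $c$, and the connectivity of the associated arc complex.

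Next I would specialize every intermediate estimate in \cite[Sections 4--7]{evw} to $(G,c)$. Since $|G| = 2\ell$, $|c| = \ell$, and $G^{\mathrm{ab}} = \Z/2\Z$ for odd $\ell$, each of the combinatorial bounds (number of orbits of the braid group on $c^k$, connectivity of the Hurwitz arc complex, dimensions of relevant $\mathrm{Ext}$ groups in the Hurwitz-space category, contribution of the spectral sequence from the central stability resolution) is a polynomial in $\ell$ of the shape $(c\ell)^d$ for explicit $c,d$. Multiplying these contributions through and absorbing small constant factors into the base yields the two exponentials of $4\ell$ targeted above; we then simply substitute into $C_\ell = 2K_\ell B_\ell$ and $Q_\ell = 4B_\ell^2$.

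The main obstacle is bookkeeping rather than any new idea: the argument of \cite{evw} is never optimized in $|c|$, so one has to re-examine each nested invocation of a stability or connectivity result and track all constants as explicit powers of $\ell$, tuning the base constant so that the $4$ in $(4\ell)^{78}$ and $(4\ell)^{60}$ absorbs the accumulated numerical slack. Once this is carried out carefully, the arithmetic $2K_\ell B_\ell/(\ell-1) \le 2(\ell-1)^{-1}(4\ell)^{138}$ and $4B_\ell^2 \le 4\cdot(4\ell)^{156}$ is immediate.
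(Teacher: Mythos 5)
Your high-level frame is right: one reduces to exhibiting explicit values $B_\ell \le (4\ell)^{78}$ and $K_\ell \le (4\ell)^{60}$ and then reads off $C_\ell = 2K_\ell B_\ell$, $Q_\ell = 4B_\ell^2$, $C'_\ell = C_\ell/(\ell-1)$ exactly as in the proofs of Proposition~\ref{prop:twistedhurwitz} and Corollary~\ref{MainCoro}. But the proposal stops precisely where the actual content of the proposition begins. The exponents $78$ and $60$ do not arise the way you describe — as a product of ``polynomial in $\ell$'' contributions from arc-complex connectivity, Ext groups, and spectral sequences, with a base tuned to absorb slack. They arise from combining exactly two inputs: the absolute bound $\dim H^i \le (4\ell)^n$ of \cite[2.5]{evw}, and the homological stability range $n > Ai + B$ (with period $D$) of \cite[6.1--6.2]{evw}, which together give $\dim H^i \le (4\ell)^{Ai+B+D}$, i.e.\ $B_\ell = (4\ell)^A$ and $K_\ell = (4\ell)^{B+D}$. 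So the whole proposition reduces to computing $A$, $B$, $D$ for our specific $(G,c)$, and your write-up never engages with how that is done.

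The missing idea is the explicit analysis of the graded ring $R$ of \cite[\S 3]{evw} for the dihedral group of order $2\ell$ with $c$ the class of involutions. One must show that for $n \ge 4$ a braid orbit of $n$-tuples of involutions is determined by its boundary monodromy and its monodromy group, so $\dim R_n = 2\ell$ for $n\ge 4$, and that the degree-$2$ central element $U = \sum_{\tau \in c}(\tau,\tau)$ induces isomorphisms $R_n \to R_{n+2}$ in that range; hence $A(R) = \max(\deg R[U], \deg R/UR) \le 4$ and $\deg U = 2$. Feeding this into \cite[4.5.3]{evw} gives $A_0 = 6A(R) + \deg U = 26$ and $A_2 = A(R) + \deg U = 6$, and then the proof of \cite[6.1]{evw} gives $\deg H_j(\mathcal{K}(M_i)) \le A_2 + A_0(3i+j)$, whence $A = 3A_0 = 78$, $B = 2A_0 + A_2 = 58$, $D = \deg U = 2$. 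Without this group-theoretic computation there is no derivation of $78$ and $60$; saying one would ``track all constants'' and ``tune the base'' asserts the conclusion rather than proving it. Your description of the roles of $K_\ell$ and $B_\ell$ (stable range versus number of generators of a stable module) is also not how these constants enter; both are powers of $4\ell$ determined by the stability range alone.
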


\begin{proof}  By the proof of Corollary~\ref{MainCoro}, we may take $C'_\ell$ to be $C_\ell / (\ell - 1)$, where $C_\ell$ is the constant appearing in the statement of Proposition~\ref{prop:twistedhurwitz}. Moreover, we may take $C_\ell$ to be $2K_\ell B_\ell$ and $Q_\ell$ to be $4 B_\ell^2$, where $B_\ell,K_\ell$ are the constants appearing in the proof of Proposition~\ref{prop:twistedhurwitz} controlling the exponential growth of the Betti numbers of the relevant Hurwitz space.  We now explain how to bound $B_\ell$ explicitly.

In \cite[7.8.1]{evw}, the bound
\beq
\dim H^{i}_{\text{\'et}}((X_n^a)_{\Fqbar}; \Q_\lambda) \leq K_\ell(B_\ell)^i
\eeq
arises from two facts. First, there is a stability theorem~\cite[6.2]{evw}, which tells us in this context that
\begin{equation}
\dim H^{i}_{\text{\'et}}((X_n^a)_{\Fqbar}; \Q_\lambda) = 
\dim H^{i}_{\text{\'et}}((X_{n+D}^a)_{\Fqbar}; \Q_\lambda)
\label{eq:stab}
\end{equation}
for all $n > Ai+B$, where $A,B,$ and $D$ are constants we shall specify. Second, there is an absolute bound~\cite[2.5]{evw} which tells us that
\beq
\dim H^{i}_{\text{\'et}}((X_n^a)_{\Fqbar}; \Q_\lambda) \leq (4\ell)^n.
\eeq
These two facts together imply that
\beq
\dim H^{i}_{\text{\'et}}((X_n^a)_{\Fqbar}; \Q_\lambda) \leq (4\ell)^{Ai+B+D}
\eeq
so we may take $B_\ell = (4\ell)^A$ and $K_\ell = (4\ell)^{B+D}$.  It remains to compute $A,B,$ and $D$. 

The key object of computation is the ring $R$ defined in \cite[\S 3]{evw}.  This ring is defined for any finite group $G$ and any conjugacy-closed subset of $G$; we will consider here just the case relevant to us, which is that where $G$ is the dihedral group of order $2\ell$ and $c$ is the class of involutions in $G$.  The set of $n$-tuples of involutions $(\tau_1, \ldots, \tau_n) \in G^n$ carries a natural action of the $n$-strand braid group; the ring $R$ is a graded $\Q$-algebra whose degree-$n$ part is spanned by the set of orbits of that action, which set we denote $\Sigma_n$.  The multiplication in $R$ is given by concatenation of $n$-tuples.

The key fact about $R$ is that it contains a central element $U$ with the property that $R[U]$ and $R/UR$ both have finite degree (that is, they are supported in only finitely many grades.)  In the dihedral case, $R$ and $U$ are particularly easy to describe.  For any $n$, there is map from $\Sigma_n$ to $G$ sending $(\tau_1, \ldots, \tau_n)$ to the product $\tau_1 \ldots, \tau_n$, which is called the {\em boundary monodromy}. Each $n$-tuple in $\Sigma_n$ also has a {\em monodromy group}; namely, the group generated by $\tau_1, \ldots, \tau_n$.  The possible monodromy groups are just the order-$2$ subgroups of $G$ and $G$ itself.  It is not hard to check that, for all $n \geq 4$, the elements of $\Sigma_n$ are determined by their boundary monodromy and their monodromy group; to be precise, $\Sigma_n$ consists of $\ell$ orbits consisting of the single element $(\tau, \tau, \ldots, \tau)$ as $\tau$ ranges over the $\ell$ involutions, and $\ell$ more orbits, each of which consists of all $n$-tuples with monodromy group $G$ and boundary monodromy $g$, as $g$ ranges over the index-$2$ cyclic subgroup of $G$ (when $n$ is even) or its nontrivial coset (when $n$ is odd.)  In particular, $\dim R_n = 2\ell$ for all $n \geq 4$.  We may take $U$ to be the degree-$2$ central operator 
\beq
U = \sum_{\tau \in c} (\tau,\tau)
\eeq
and check that $U$ induces an isomorphism from $R_n$ to $R_{n+2}$ for all $n \geq 4$.  In particular, $\deg R[U]$ and $\deg R/UR$ are both at most $4$, where by the degree of a graded ring we mean the highest grade represented in its support.

This combinatorial information about the dihedral group is what goes into the computation of constants in \cite{evw}.  The constant $D$ in \cite[6.1]{evw} is just the degree of $U$, which is $2$.  The stability result in \cite[6.1]{evw} is derived from a general theorem~\cite[4.2]{evw} about $R$-modules.  The $R$-module $M$ governing the $H^i$ of Hurwitz space, to which we apply \cite[4.2]{evw} is the one called $M_i$ in \cite[6.1]{evw},  So (using the constants appearing in those theorems) stability begins when $n = \max(h_0,h_1) + A_0$, where $h_j$ is the quantity denoted $\deg H_j(\mathcal{K}(M_i)$ in \cite[6.1]{evw}.  In turn, as asserted in the first paragraph of the proof of \cite[6.1]{evw}, we have
\beq
\deg H_j(\mathcal{K}(M_i)) \leq A_2 + A_0(3i + j)
\eeq
So we find that \eqref{eq:stab} holds for all $n \leq A_2 + A_0(3i+1) + A_0 = 3A_0 i + (2A_0 + A_2).$  In other words, we may take $A = 3A_0$ and $B = 2A_0 + A_2$.

Finally, the values of $A_0$ and $A_2$ are given in \cite[4.5.3]{evw}.  They are defined in terms of $A(R) = \max(\deg R[U],\deg R/UR)$, which for us is $4$.  Now $A_0 = 6A(R) + \deg U = 26$ and $A_2 = A(R) + \deg U = 6$.  Thus, $A = 78$ and $B=58$.  Since $D=2$, we conclude that we may take $B_\ell = (4 \ell)^{78}$ and $K_\ell = (4 \ell)^{60}$.  So we have $Q_\ell = 4 \cdot (4 \ell)^{156}$ and $C'_\ell = 2 (\ell - 1)^{-1} (4 \ell)^{138}$, as claimed.

\end{proof}

\section{Application to nonvanishing of $L$-functions}

We can use the above reasoning to bound the number of quadratic $L$-functions over function fields which vanish at a specified point on the critical line.  For the rest of this section we fix an odd prime $p$ and consider only fields of characteristic $p$. We note that, if $\chi_f$ is a quadratic character of $\F_q(x)$, then $L(s,\chi_f)$ can vanish only at a point $s$ such that $q^s$ is a $q$-Weil number of weight $1$. We first recall the following lemma relating the vanishing of the $L$-function of a quadratic character in terms of the Frobenius eigenvalues of a hyperelliptic curve; 

	\begin{lemma}\label{charactertocurve}
	Let $f$ be a monic squarefree polynomial in $\mathbb{F}_q[x]$ and $\chi_f$ be the quadratic character with conductor $f$. Let $C$ be the hyperelliptic curve defined by $y^2 = f(x)$ and let $P \in \mathbb{Z}[x]$ be the characteristic polynomial of geometric Frobenius acting on the Jacobian of $C$.  Then for any $s \ne 0$,  $L(s,\chi_f) = 0$ if and only if $P(q^s) = 0$.
	\end{lemma}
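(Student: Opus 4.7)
The plan is to compare the zeta function of the hyperelliptic curve $C$ with the factorization of the Dedekind zeta function of its function field. First I would identify the function field of $C$: since $C$ is the smooth projective model of $y^2=f(x)$, its function field is the quadratic extension $K_f = \mathbb{F}_q(x)(\sqrt{f})$. The zeta function $Z_C(q^{-s})$, viewed as an Euler product over closed points of $C$, coincides with the Dedekind zeta function $\zeta_{K_f}(s)$, viewed as an Euler product over places of $K_f$, because closed points of the smooth proper model are in bijection with the places of its function field.

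Next I would invoke the standard factorization in a quadratic extension,
\beq
\zeta_{K_f}(s) = \zeta_{\mathbb{F}_q(x)}(s) \cdot L(s,\chi_f),
\eeq
so combining with $\zeta_{\mathbb{F}_q(x)}(s) = ((1-q^{-s})(1-q^{1-s}))^{-1}$ and the Grothendieck--Lefschetz expression $Z_C(T) = P_C(T)/((1-T)(1-qT))$ (with $P_C(T) = \det(1 - T \Frob_q \mid H^1_{\text{\'et}}(C_{\Fqbar},\Q_\ell))$) one obtains the clean identity $L(s,\chi_f) = P_C(q^{-s})$ for all $s$ where both sides are defined. The hypothesis $s \neq 0$ is needed precisely because $\zeta_{\mathbb{F}_q(x)}(s)$ has a pole at $s=0$, so dividing is only legitimate away from there.

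Finally I would compare $P_C$ to $P$. Writing $P(x) = \prod_{i=1}^{2g}(x - \alpha_i)$ for the characteristic polynomial of geometric Frobenius on the Tate module, standard duality between the Tate module and $H^1_{\text{\'et}}$ gives $P_C(T) = \prod_{i}(1 - \alpha_i T) = T^{2g} P(T^{-1})$, hence
\beq
L(s,\chi_f) \;=\; P_C(q^{-s}) \;=\; q^{-2gs}\, P(q^s).
\eeq
Since $q^{-2gs}$ never vanishes, this immediately yields the equivalence $L(s,\chi_f)=0 \iff P(q^s)=0$.

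The only real subtlety is making sure the local factors at the place(s) at infinity of $K_f$ match the points at infinity of the smooth projective model $C$; this is automatic by the bijection between places and closed points mentioned above, so the three cases distinguished by the parity of $\deg f$ and whether the leading coefficient is a square in $\mathbb{F}_q^\times$ do not require separate treatment. I expect no significant obstacle — the lemma is essentially a bookkeeping identity — but I would take care to state the normalizations of $L(s,\chi_f)$, $Z_C$, and $P$ explicitly so that the factor $q^{-2gs}$ is correctly accounted for.
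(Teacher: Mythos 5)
Your proposal is correct and is essentially the argument the paper has in mind; the paper simply cites the standard fact that $P$ is (the reverse of) the numerator of $Z_C$ and that $Z_C$ factors through $L(s,\chi_f)$ (referring to \cite[Section 2]{R}) rather than spelling it out. One caveat: your claim that the place at infinity requires no case analysis is not quite right if $L(s,\chi_f)$ denotes, as is standard, the Dirichlet series over monic polynomials (equivalently, the Euler product over \emph{finite} places only). In that normalization the clean identity $L(s,\chi_f)=P_C(q^{-s})$ holds only when $\deg f$ is odd (where $\infty$ ramifies); when $\deg f$ is even and $f$ is monic, $\infty$ splits and one gets $L(s,\chi_f)=(1-q^{-s})\,P_C(q^{-s})$, with a trivial zero at $q^{-s}=1$. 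This extra factor---not the pole of $\zeta_{\mathbb{F}_q(x)}$, since the ratio $\zeta_{K_f}/\zeta_{\mathbb{F}_q(x)}$ is a polynomial in $q^{-s}$ and has no issue at $s=0$---is the actual reason the lemma excludes $s=0$; since $(1-q^{-s})\neq 0$ there, your final equivalence $L(s,\chi_f)=0\iff P(q^s)=0$ still stands.
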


This is immediate from the description of $P$ as the numerator of the zeta function of $C$, and the connection of the latter to $L(s, \chi_f)$ (see, for instance, \cite[Section 2]{R}).

\begin{theorem}\label{ThmNonvanishing}
 For any squarefree polynomial $f \in Q_{n,q}$, let $L(s,\chi_f)$ be the Dirichlet $L$-function associated to the quadratic character $\chi_f$ as was defined in Section \ref{Notation}. Then for any $s \ne 0$,
$$ \limsup_{n \to \infty} \frac{|\{f \in Q_{n,q} \mid L(s, \chi_f) = 0  \}|}{|Q_{n,q}|} \ll  q^{-1/276} $$
where the limit is taken over all powers $q$ of a fixed odd prime number $p$.
\end{theorem}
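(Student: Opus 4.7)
The plan is to combine Lemma~\ref{charactertocurve} with the setup of Section~\ref{Notation} to reduce vanishing of $L(s,\chi_f)$ to the existence of a prescribed Frobenius eigenvalue on $J_f[\ell]$, and then to apply Proposition~\ref{prop:explicitbounds} with an optimally chosen prime $\ell$ of size approximately $q^{1/276}$.

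First, by Lemma~\ref{charactertocurve}, $L(s,\chi_f)=0$ is equivalent to $P_f(q^s)=0$. Since every root of $P_f$ is a $q$-Weil number of weight $1$, we may assume that the Weil number $\alpha$ associated with $q^s$ is of weight $1$; otherwise the counted set is empty and there is nothing to prove. In the notation of Section~\ref{Notation} this places the set being counted inside $Q_{n,q}^{\alpha}$. Let $g_\alpha\in\Z[x]$ be the minimal polynomial of $\alpha$. By the observation recorded at the end of Section~\ref{Notation}, for any prime $\ell\neq p$ and any $a\in(\Z/\ell\Z)^{\times}$ with $g_\alpha(a)\equiv 0\pmod{\ell}$ one has $Q_{n,q}^{\alpha}\subseteq Q_{n,q}^{a,\ell}$ (concretely, $g_\alpha\mid P_f$ implies $P_f(a)\equiv 0\pmod{\ell}$, so the $a$-eigenspace of Frobenius on $J_f[\ell]$ is nontrivial and $m_a(f)\ge\ell-1>0$). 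Proposition~\ref{prop:explicitbounds} then yields, for $n$ and $q$ in the stable range,
$$\frac{|\{f\in Q_{n,q}:L(s,\chi_f)=0\}|}{|Q_{n,q}|}\;\le\;\frac{1}{\ell-1}+\frac{2\,(4\ell)^{138}}{\ell-1}\,q^{-1/2}.$$

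Second, one optimizes over $\ell$. Balancing the two terms on the right gives $\ell\asymp q^{1/276}$ (up to absolute constants), which makes both terms $\ll q^{-1/276}$; taking the $\limsup$ in $n$ then gives the claimed bound. To carry this out one must exhibit, for each sufficiently large $q=p^k$, a prime $\ell$ of this order of magnitude for which $g_\alpha$ has a nonzero root modulo $\ell$. The case $s=\tfrac{1}{2}$ is the principal one: then $\alpha=\sqrt{q}$, and if $k$ is even $\alpha\in\Z$ so every $\ell\neq p$ qualifies, while if $k$ is odd $g_\alpha(x)=x^{2}-q$ has a root modulo $\ell$ precisely when $p$ is a quadratic residue modulo $\ell$, a condition of density $\tfrac{1}{2}$ by quadratic reciprocity that, by the prime number theorem, meets any dyadic interval $[X,2X]$ once $X$ is large enough. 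For $s\neq\tfrac{1}{2}$, the transcendence-theoretic observation in the introduction shows that $\alpha$ can be algebraic only on a very thin set of $q$, and on this set Chebotarev's theorem applied to the splitting field of $g_\alpha$ supplies a positive density of admissible $\ell$ in the target range.

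The mod-$\ell$ reduction and the numerical optimization are essentially formal once Proposition~\ref{prop:explicitbounds} is available; the main technical hurdle is the last arithmetic step, namely guaranteeing that a prime $\ell$ of size $\asymp q^{1/276}$ with $g_\alpha$ having a unit root modulo $\ell$ exists uniformly in $q$. The analysis above addresses this by direct reciprocity in the central case $s=\tfrac{1}{2}$ and by Chebotarev in the remaining sparse situations, with the implicit constant in $\ll q^{-1/276}$ depending only on $p$ and $s$, as required.
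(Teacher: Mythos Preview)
Your proposal follows essentially the same route as the paper: reduce via Lemma~\ref{charactertocurve} to membership in $Q_{n,q}^{\alpha}$, pass to $Q_{n,q}^{a,\ell}$ for a suitable $\ell$ and $a$, apply the explicit form of Corollary~\ref{MainCoro} from Proposition~\ref{prop:explicitbounds}, and optimize $\ell\asymp q^{1/276}$.

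The one place where your argument diverges from the paper's, and where it needs tightening, is the Chebotarev step. You work with $g_\alpha=g_{q^s}$, which depends on $q=p^t$, and then invoke Chebotarev for its splitting field. As written this does not immediately give a density of admissible $\ell$ that is uniform in $t$, since a priori the splitting field of $g_{q^s}$ could vary with $t$. The paper sidesteps this by applying Chebotarev once to the \emph{fixed} polynomial $g_{p^s}$ (depending only on $p$ and $s$) to obtain a prime $\ell$ of the desired size at which $g_{p^s}$ splits completely, say $g_{p^s}(a)\equiv 0\pmod\ell$; then for $q=p^t$ one has $g_{q^s}(a^t)\equiv 0\pmod\ell$ automatically, and one uses $a^t$ in place of your $a$. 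Your version can be repaired the same way (or by noting that the splitting field of $g_{q^s}$ is contained in that of $g_{p^s}$, so its degree is bounded independently of $t$), but this uniformity should be made explicit.

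A minor correction: your remark that for $s\neq\tfrac12$ ``$\alpha$ can be algebraic only on a very thin set of $q$'' misreads the transcendence input. The six-exponentials observation in the introduction gives finiteness in $p$, not in $q$; since $p$ is fixed here, either $p^s$ is algebraic (and then so is $q^s=(p^s)^t$ for every $t$, so Chebotarev is genuinely needed for all $q$) or $p^s$ is transcendental (and then $L(s,\chi_f)$ never vanishes and the bound is trivial). In the former case one proceeds exactly as above with the fixed polynomial $g_{p^s}$.
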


\begin{proof}
    Fix an odd prime number $p$, and let $q$ be a power of $p$.
	By Lemma \ref{charactertocurve}, $L(s,\chi_f)=0$ is equivalent to $P(q^{-s})=0$ where $P(x)\in \Z[x]$ is the characteristic polynomial of Frobenius acting on the Jacobian of the hyperelliptic curve defined by $y^2=f(x)$. Thus, the set $\{f \in Q_{n,q} \mid L(s, \chi_f) = 0  \}$ is the same as $Q_{n,q}^{q^s}$. 	
	
	By Chebotarev's density theorem, we can (for large enough $q$) find a prime 
	$$ \ell = \frac{1}{4} \left(\frac{q}{4}\right)^{1/276}(1 + o(1)) $$
	mod which $g_{p^s}$, the minimal polynomial of $p^s$, splits completely. Let $a \in \mathbb{Z}/l \mathbb{Z}$ such that $g_{p^s}(a) = 0 \bmod \ell$.  If $q = p^t$, then any $f$ with $L(\chi_f,s) = 0$ has $m_{a^t}(f) > 0$.  So
\beq
\frac{|Q_{n,q}^{q^s}|}{|Q_{n,q}|} \le \frac{|Q_{n,q}^{a^t,\ell}|}{|Q_{n,q}|}
\eeq
and now we can apply Corollary \ref{MainCoro} to conclude using the second equation of \ref{FrobActEq}
that, for all sufficiently large $t$, we have
\beq
\limsup_{n \to \infty} \frac{|Q_{n,q}^{q^s}|}{|Q_{n,q}|}  \leq \frac{1}{\ell -1}+C'_{\ell} q^{-1/2}.
\eeq
The required bound follows from Proposition \ref{prop:explicitbounds}.

\end{proof}

Results on the vanishing of quadratic $L$-functions over function fields can be used to study the rank distribution of quadratic twist families of constant abelian varieties. In the following corollary, we show that as the constant field grows (so the characteristic is not changing), the probability for a quadratic twist of a constant abelian variety to have positive rank goes to $0$.  In the elliptic curve case, this agrees with the general  ``Minimalist Conjecture" philosophy, which holds that positive ranks should be a density-$0$ phenomenon except when forced by parity considerations from the functional equation (in this setting the functional equation never forces positive rank, and the rank is always even.)

	\begin{corollary}\label{rank0}
	Let $A$ be an abelian variety defined over a finite field $\mathbb{F}_q$ of odd characteristic. For each $f \in Q_{n,q^m}$, denote by $A_f$ the quadratic twist of $A \times_{\F_{q}} \F_{q^m}(x)$ by $f$.
	Let $R_{n,m}$ be the set $\{ f \in Q_{n,q^m}: A_f \text{ has positive rank} \}$.
	Then $$\lim\limits_{m \to \infty} \limsup_{n \to \infty} \frac{|R_{n,m}|}{|Q_{n,q^m}|} =0 .$$
	\end{corollary}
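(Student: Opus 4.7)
The plan is to convert the condition that $A_f$ has positive rank into the condition that $L(s,\chi_f)$ vanishes at one of finitely many fixed points on the critical line, and then apply Theorem~\ref{ThmNonvanishing} together with a union bound.

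Since $A$ is a constant abelian variety, the Birch--Swinnerton-Dyer conjecture for $A_f$ over $\F_{q^m}(x)$ is known (it reduces in this setting to the Tate conjecture for a product of curves), so $A_f$ has positive rank if and only if $L(s, A_f)$ vanishes at $s=1$. Let $\alpha_1, \dots, \alpha_{2g_A}$ be the Frobenius eigenvalues of $A/\F_q$, each of absolute value $\sqrt{q}$. Viewing $A$ over $\F_{q^m}$, the Frobenius eigenvalues become $\alpha_i^m$, and at an unramified place $v$ of $\F_{q^m}(x)$ of degree $d$ the eigenvalues of Frobenius on the Tate module of $A_f$ are $\chi_f(v)\,\alpha_i^{md}$. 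A direct Euler-product calculation then gives
\begin{equation*}
L(s, A_f) \;=\; \prod_{i=1}^{2g_A} \mathcal{L}\!\bigl(\alpha_i^m (q^m)^{-s},\;\chi_f\bigr),
\end{equation*}
where $\mathcal{L}(u,\chi_f) := \prod_v (1 - \chi_f(v)\, u^{\deg v})^{-1}$ is $L(s,\chi_f)$ expressed as a polynomial in the variable $u = (q^m)^{-s}$.

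Setting $s=1$, the $i$-th factor vanishes exactly when $L(s_i,\chi_f) = 0$, where $s_i$ is determined by $(q^m)^{s_i} = (q/\alpha_i)^m$. Since $|q/\alpha_i| = \sqrt{q}$, one checks that $\mathrm{Re}(s_i) = 1/2$ and that $s_i$ depends only on $A$ and $q$, not on $m$ or $f$. Therefore $f \in R_{n,m}$ forces $L(s_i,\chi_f) = 0$ for some $i \in \{1,\dots,2g_A\}$. Since $q^m$ is a power of the fixed odd prime $p$, Theorem~\ref{ThmNonvanishing} applied at each of the $2g_A$ fixed points $s_i$, combined with a union bound, yields
\begin{equation*}
\limsup_{n \to \infty} \frac{|R_{n,m}|}{|Q_{n,q^m}|} \;\ll_{A,q}\; (q^m)^{-1/276},
\end{equation*}
which tends to $0$ as $m \to \infty$.

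The main obstacle is the BSD input: identifying the Mordell--Weil rank of $A_f(\F_{q^m}(x))$ with the analytic order of vanishing of $L(s, A_f)$ at $s=1$ for the quadratic twist of a constant abelian variety. This rests on (and in fact is essentially equivalent to) the Tate conjecture for an appropriate product, which is known in this constant-coefficient setting. Given this input, the remaining Euler-product factorization and application of Theorem~\ref{ThmNonvanishing} are routine bookkeeping.
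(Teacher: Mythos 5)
Your proposal is correct and follows essentially the same route as the paper: the paper's proof also reduces positive rank of $A_f$ to vanishing of $L(s,\chi_f)$ at the finitely many points $s$ with $q^{-s}$ a root of the Frobenius characteristic polynomial of $A$ (citing \cite[Proposition 4.6]{Wanlin} for the rank--vanishing equivalence you derive via the Euler-product factorization and BSD for isotrivial abelian varieties), and then applies Theorem~\ref{ThmNonvanishing}. Note that for the stated upper bound you only need the unconditional inequality $\rank \le \mathrm{ord}_{s=1} L(s,A_f)$ (Tate), not full BSD, so your "main obstacle" is in fact harmless.
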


	\begin{proof}
		Let $P(x)$ be the characteristic polynomial of Frobenius acting on the Tate module of $A$ and let $q^{-s}$ be one of its roots. Then $\rank A_f >0$ is equivalent to $L(s,\chi_f)=0$. (See \cite[Proposition 4.6]{Wanlin} for a similar statement with the same proof.) Thus, the statement is a direct application of Theorem \ref{ThmNonvanishing}.
	\end{proof}

We now prove Theorem~\ref{SpecialCasefromMatrix}, which makes use of the mod $2$ Galois representations on $J(C)$ rather than the representations modulo odd primes.

\begin{proof}[Proof of Theorem \ref{SpecialCasefromMatrix}]

	Let $x_1,...,x_{2g+2}$ be the set of Weierstrass points of $C$.  The $2$-torsion subgroup $J(C)[2]$ is spanned by the degree-$0$ $2$-torsion divisors $x_i - x_j$.  That is, the group of divisors of the form $\sum a_i x_i$ with $\sum a_i = 0$ surjects onto $J(C)[2]$.  Note also that $x_1 + \ldots + x_{2g+2}-(2g+2)x_1$ is a principal divisor and thus $x_1 + \ldots + x_{2g+2}$ is $0$ in $J(C)[2]$. See \cite[Secion 4]{Gross} for detailed discussion.  This identifies $J(C)[2]$ with an explicit subquotient of $\F_2^{2g+2}$; namely, $J(C)[2]$ is the quotient of the subspace $(a_1, \ldots, a_{2g+2}): \sum a_i = 0$ by the $1$-dimensional subspace spanned by $(1,\ldots,1)$.  
	
	This identification is equivariant for the Frobenius action on both sides, so it allows us to describe the mod $2$ Galois representation afforded by $J(C)$ in terms of the permutation $\pi$ which Frobenius induces on $x_1, \ldots, x_{2g+2}.$  To be precise, the action of $S_{2g+2}$ on $J(C)[2]$ is a representation $\rho: S_{2g+2} \ra \Sp_{2g}(\Z/2\Z)$, and the action of Frobenius on $J(C)[2]$ is given by $\rho(\pi)$. 
	
	The conditions on $\pi$ given in Theorem~\ref{SpecialCasefromMatrix} are equivalent to the condition that $\pi^2$ is a product of two disjoint odd cycles.  Thus, the action of $\pi^2$ in its permutation representation $\F_2^{2g+2}$ has eigenvalues given by $\mu_k$ and $\mu_{2g+2-k}$ for some odd $1 \leq k \leq 2g+1$; passing to the subquotient $J(C)[2]$ removes two eigenspaces of $\rho(\pi^2)$ with the eigenvalue $1$. So the eigenvalues of $\Frob^2$ on $J(C)[2]$ are the multiset $\mu_{k}'  \cup \mu_{2g+2-k}'$ where $\mu_n'$ denotes the nontrivial $n$th roots of unity.  We see in particular that $\rho(\pi^2)$ does not have $1$ as an eigenvalue.  But if the zeta function $Z_C$ had a zero at $1/2$, then $\sqrt{q}$ would be a Frobenius eigenvalue on $C$, which would mean that $q$ was an eigenvalue of $\Frob^2$; we have shown that $\Frob^2$ has no eigenvalue congruent to $1$ mod $2$, which rules this out.  This proves (1).  
	
	What's more, the multiset $\mu_k' \cup \mu_{2g+2-k}'$ contains any eigenvalue at most twice, and if $(k,2g+2-k) = 1$, no eigenvalue appears more than once.  This proves (2) (or rather, it proves (2) for the zeta function of $C/\F_{q^2}$, from which (2) is immediate.)

\end{proof}

\begin{proof}[Proof of Corollary \ref{quadcharacterCorollary}]

By assumption, $f$ is an irreducible polynomial over $\mathbb{F}_q$. So when $\deg f = n$ is even, Frobenius acts on the set of Weierstrass points of $C_f: y^2 = f(x)$ as a $n$-cycle. If $\deg f = n$ is odd, then Frobenius acts on the set of Weierstrass points of $C_f$ as a disjoint union of a $n$-cycle and a $1$-cycle. In either case, when $4 \nmid \deg f$ we can apply Theorem \ref{SpecialCasefromMatrix} to conclude that $Z_{C_f}$ does not vanish at $1/2$ and so behaves $L(s,\chi_f)$.

 For any $X=q^{2g+2}$, the set of irreducible polynomials of odd degree at most $2g+1$ gives quadratic characters with bounded conductor whose $L$-function does not vanish at the central point $s=1/2$. By the prime number theorem for function fields, the number of irreducible polynomials in $\mathbb{F}_q[x]$ of degree at most $n$ is $\gg q^n/n$. This gives the lower bound in statement.

\end{proof}

The proof of Theorem~\ref{ModpVersion} is very similar to that of Theorem \ref{SpecialCasefromMatrix}, but we treat it separately in order to make the hyperelliptic case above more readable.

\begin{proof}[Proof of Theorem \ref{ModpVersion}]

Let $x_1, \ldots, x_m$ be the ramification points of the $(\Z/\ell\Z)$-cover of $\P^1$ in $S$, where $m=k_1 + \dots + k_r$. The Jacobian $J(C)$ of $C$ carries an action of $\Z[(\Z/\ell\Z)]$; write $\lambda \in \Z[(\Z/\ell \Z)]$ for $\zeta_\ell-1$,  where $\zeta_\ell$ is a generator of $(\Z/\ell \Z)$.  A Riemann-Hurwitz computation shows that the genus of $C$ is $(m-2)(\ell-1)/2$, so the Tate module $T_\ell J(C)$ is a free $\Z_\ell[\zeta_\ell]$-module of rank $m-2$, and $J(C)[\lambda]$ has dimension $m-2$. 

The $\lambda$-torsion subgroup of $J(C)$ is spanned by the degree-$0$ $\lambda$-torsion divisors $x_i - x_j$.  That is, the group of divisors of the form $\sum a_i x_i$ with $\sum a_i = 0$ surjects onto $J(C)[\lambda]$.  This surjection is not an isomorphism; there is a $1$-dimensional kernel, which we can describe as follows.  Over $\Fqbar$, the curve $C$ has an affine model of the form $y^\ell = f(x)$ with $f$ a rational function with no zeroes or poles at $\infty$.  Then the principal divisor associated to $y$ is $\sum a_i x_i$ where $a_i = \ord_{x_i} f$.  We have now expressed $J(C)[\lambda]$ as an explicit subquotient of $\F_\ell^m$.

This identification is equivariant for the Frobenius action on both sides, so it allows us to describe the mod $\ell$ Galois representation afforded by $J(C)$ in terms of the permutation $\pi$ which Frobenius induces on $x_1, \ldots, x_m.$ 

The action of $\pi$ splits $x_1, \ldots, x_m$ into cycles of length $k_1, \ldots, k_r$, which by hypothesis are prime to $\ell$, and which must be multiples of $d$, where $d$ is the order of $q$ in $\F_\ell^*$.  So the eigenvalues of $\pi$ in its action on $\F_\ell^m$ are the union (as multisets) $\bigcup_{j=1}^r \mu_{k_j}$.  Now the composition factors of $\F_\ell^m$ as a representation of the cyclic group $\langle \pi \rangle$ are $J(C)[\lambda]$, $\F_\ell \mbox{div}(y)$, and the $\pi$-trivial one-dimensional representation onto which $\F_\ell^m$ maps by summing coordinates. The action of $\pi$ on the latter factor is trivial, while $\pi$ acts on $\F_\ell \mbox{div}(y)$ as multiplication by $q$.  If the zeta function $Z_C$ had a zero at $1/2$, then $\sqrt{q}$ would be a Frobenius eigenvalue of $C$, which would mean that some eigenvalue $\mu$ of the action of $\pi$ on $J(C)[\lambda]$ satisfied $\mu^2 = q$.

In case $q$ is congruent to $1$ modulo $\ell$ (i.e., $d=1$) the eigenvalues of $\pi$ in its action on $J(C)[\lambda]$ are the multiset $\bigcup_{j=1}^r \mu'_{k_j}$ together with $r-2$ copies of $1$, where $\mu_n'$ denotes the nontrivial $n$th roots of unity.  The hypotheses $r=2$ and $k_j$ odd now guarantee that the eigenvalues of $\pi$ on $J(C)[\lambda]$ contain no copies of either $1$ or $-1$, completing the proof in this case.

If $d>1$, we note that our condition on $q^{\kappa_j}$ can be satisfied only when $d$ is even.  (If $d$ is odd, then $\kappa_j$ is always a multiple of $d$, so $q^{\kappa_j}= 1$.)  When $d$ is even, our condition in fact says precisely that each $k_i$ is a multiple of $d$ but not of $2d$.  The two square roots of $q$ in $\bar{\F}_\ell^*$ both have order $2d$, and thus neither can appear among the eigenvalues of Frobenius on $J(C)[\lambda]$.

We now turn to the first assertion of the theorem.  We note that the coprimality of the $k_i$ implies that $d=1$, or in other words that $q$ is conguent to $1$ modulo $\ell$.  The fact that $J(C)$ carries an action of $\Z/\ell\Z$ defined over $\F_q$ with trivial invariant subspace implies that the Frobenius eigenvalues on $J(C)$ all appear with multiplicity a multiple of $\ell-1$, and that a root of multiplicity $k(\ell-1)$ reduces to a root of multiplicity $k$ in the action of Frobenius on $J(C)[\lambda]$.  So we just need to show that the action of Frobenius on $J(C)[\lambda]$ has no repeated eigenvalues.  The coprimality of the $k_i$ guarantees that the union $\bigcup_{j=1}^r \mu'_{k_j}$ is disjoint; the remaining eigenvalues are $r-2$ copies of $1$, so since $r \leq 3$ we are done.

\end{proof}
		
\begin{proof}[Proof of Corollary \ref{characterCorollary}]
We first consider the case $d=1$ (i.e., $q$ is congruent to $1$ modulo $\ell$).

Let $f_1,f_2$ be two distinct monic irreducible polynomials in $\mathbb{F}_q[t]$ of degrees being odd and prime to $\ell$. Let $d_1 = \deg f_1$ and $d_2 = \deg f_2$. Let $e \in \{1,\ldots,\ell-1\}$ be such that $\ell \mid d_1+ed_2$. The smooth projective curve $C$ with affine model $y^\ell =f_1f_2^e$ admits a cyclic degree $\ell$ map to $\mathbb{P}^1_{\mathbb{F}_q}$ where the set of branch points are roots of $f =f_1f_2$ in the affine line (the condition on $d_1 + ed_2$ guarantees there is no branching at $\infty$.) By Theorem 1.7(2), the zeta function $Z_C$ does not vanish at $s=1/2$. Thus, all degree $\ell$ Dirichlet characters with conductor $f$ have $L$-functions nonvanishing at the central point.  Evidently, the number of such pairs $f_1,f_2$ with $d_1 + d_2 \le n$ is at least the number of irreducible polynomials of degree $n-3$; the number of characters with non-vanishing L-functions is thus bounded below by a constant multiple of $q^n/n = X/(\log X)$.  This concludes the proof in the $d=1$ case.

We now turn to the case where $q \not\equiv 1 \bmod \ell$. We recall that $d$ is the order of $q \bmod \ell$ in $(\mathbb{Z}/\ell\mathbb{Z})^*$. Let $\Sigma_{d,2d}$ be the set of monic squarefree polynomials such that all of their irreducible factors have degree divisible by $d$ but not divisible by $2d$. For any $f \in \Sigma_{d,2d}$ there exists a $(\mathbb{Z}/\ell\mathbb{Z})$ field extension $K/\mathbb{F}_q(x)$ with conductor $f$. The field $K$ is the function field of a curve $C/\mathbb{F}_q$ which admits a cyclic degree $\ell$ map to $\mathbb{P}^1_{\mathbb{F}_q}$ whose set of branch points are roots of $f$. By Theorem 1.7(2), the zeta function $Z_C$ does not vanish at $s=1/2$. Thus, all degree $\ell$ Dirichlet characters with conductor $f$ have their $L$-functions do not vanish at the central point.  The number of such characters is $(\ell-1)^{\omega(f)}$ where $\omega(f)$ denotes the number of irreducible factors of $f$.

So it remains to count the number of degree $\ell$ Dirichlet characters whose conductor is in $\Sigma_{d,2d}$, which by the above discussion is given by
\[ \sum_{f \in \Sigma_{d,2d}, |f| \le X} (\ell-1)^{\omega(f)}. \]

To estimate this sum, we start by considering the Dirichlet series 
\[ G(s) = \sum_{f \in \Sigma_{d,2d}} (\ell-1)^{\omega(f)}|f|^{-s}  \]
where $|f| = q^{\deg f}$. Then $G(s)$ has a Euler product expansion
\[G(s) = \prod_{P \in \Sigma_{d,2d}, \text{ irrd.} }(1+(\ell-1)|P|^{-s}).\] Taking  
\[H(s) = \prod_{P \in \Sigma_{d,2d}, \text{ irrd.} } (1+|P|^{-s})^{2d}, \]
we see that $G(s)^{2d}/H(s)^{\ell-1}$ is holomorphic at $s=1$. Now define
$$Z_d(s) = \prod_{d \mid \deg P,\ P \text{ irrd.}}(1+|P|^{-s})^d $$
so that $H(s) = (Z_d(s))^2/Z_{2d}(s)$. Since $Z_d(s)$ and $Z_{2d}(s)$ each have a simple pole at $s=1$, so does $H(s)$.

We conclude that $G(s)^{2d}$ has a pole at $s=1$ with order $\ell-1$ and is absolutely convergent for $\Re(s) >1$. By \cite[Theorem 3.1]{Kato}, the sum of coefficients of $G(s)$ has the following asymptotic relation
\[ \sum_{f \in \Sigma_{d,2d}, |f| \le X} (\ell-1)^{\omega(f)} \sim C\cdot X(\log X)^{\frac{\ell-1}{2d}-1} \]
where the constant
\[ C = \frac{|(\lim_{s \to 1}G(s)^{2d}(s-1)^{\ell-1})^{\frac{1}{2d}}|}{\Gamma((\ell-1)/(2d))}. \]
And it gives the desired result.

\end{proof}

\bibliographystyle{amsplain}
\bibliography{ELSbib}

\end{document}